\newcommand{\sidenote}[1]{\marginpar{{ \footnotesize #1}}}
\newcommand{\ep}{{ \epsilon  }}
\newcommand{\bq}{\begin{equation}}
\newcommand{\eq}{\end{equation}}
\newcommand{\pa}{\partial}
\newcommand{\R}{{ \mathbb{R}  }}
\newcommand{\bbr}{{ \mathbb{R}  }}
\newcommand{\bke}[1]{\left( #1 \right)}
\newcommand{\bkt}[1]{\left[ #1 \right]}
\newcommand{\norm}[1]{\left\Vert #1 \right\Vert}
\newcommand{\abs}[1]{\left| #1 \right|}
\newcommand{\om}{{ \omega  }}
\newcommand{\na}{\nabla}
\newcommand {\al}{\alpha}
\newcommand {\be}{\beta}
\newcommand{\ddt}{\frac{d}{dt}}
\newcommand{\wx}{\langle x \rangle}
\newcommand{\Del}{\Delta}
\newcommand{\sca}{\frac{n}{\phi}}
\newcommand{\psca}{ \left(\sca\right)}
\newcommand{\tsca } {\left (\sca -K \right)_+}
\newcommand{\nt} { (n-K)_+}
\newcommand{\ntt} { (n-\xi\eta(t))_+}
\newcommand{\ct} { (c-K)_+}
\newcommand{\ctt} { (c-\xi\eta(t))_+}
\newcommand{\qed}{\hfill\fbox{}\par\vspace{.2cm}}
\begin{document}
\bibliographystyle{plain}


\newtheorem{defn}{Definition}
\newtheorem{lemma}[defn]{Lemma}
\newtheorem{proposition}{Proposition}
\newtheorem{theorem}[defn]{Theorem}
\newtheorem{cor}{Corollary}
\newtheorem{remark}{Remark}
\numberwithin{equation}{section}

\def\Xint#1{\mathchoice
   {\XXint\displaystyle\textstyle{#1}}%
   {\XXint\textstyle\scriptstyle{#1}}%
   {\XXint\scriptstyle\scriptscriptstyle{#1}}%
   {\XXint\scriptscriptstyle\scriptscriptstyle{#1}}%
   \!\int}
\def\XXint#1#2#3{{\setbox0=\hbox{$#1{#2#3}{\int}$}
     \vcenter{\hbox{$#2#3$}}\kern-.5\wd0}}
\def\ddashint{\Xint=}
\def\dashint{\Xint-}
\def\aint{\Xint\diagup}

\newenvironment{proof}{{\bf Proof.}}{\hfill\fbox{}\par\vspace{.2cm}}
\newenvironment{pfthm1}{{\par\noindent\bf
            Proof of Theorem \ref{Theorem3} }}{\hfill\fbox{}\par\vspace{.2cm}}
\newenvironment{pfthm2}{{\par\noindent\bf
            Proof of Theorem  \ref{Theorem4} }}{\hfill\fbox{}\par\vspace{.2cm}}
\newenvironment{pfthm3}{{\par\noindent\bf
Proof of Theorem \ref{blowup2}} }{\hfill\fbox{}\par\vspace{.2cm}}
\newenvironment{pfthm4}{{\par\noindent\bf
Sketch of proof of Theorem \ref{Theorem6}.
}}{\hfill\fbox{}\par\vspace{.2cm}}
\newenvironment{pfthm5}{{\par\noindent\bf
Proof of Theorem 5. }}{\hfill\fbox{}\par\vspace{.2cm}}
\newenvironment{pflemsregular}{{\par\noindent\bf
            Proof of Lemma \ref{sregular}. }}{\hfill\fbox{}\par\vspace{.2cm}}

\title{Global existence and temporal decay in Keller-Segel models coupled to fluid equations}
\author{Myeongju Chae, Kyungkeun Kang and Jihoon Lee}

\date{}

\maketitle
\begin{abstract}
  We consider a Keller-Segel model coupled to the
  incompressible Navier-Stokes equations in spatial dimensions two and three.
  We establish the local existence of regular solutions and
  present some blow-up criteria for both cases that equations of oxygen concentration
  is of parabolic or hyperbolic type. We also prove global existence and decay
  estimate in time under the some smallness conditions of initial
  data.
  \newline{\bf 2000 AMS Subject
Classification}: 35Q30, 35Q35, 76Dxx, 76Bxx
\newline {\bf Keywords}: blow-up criteria, decay estimates, Keller-Segel,
Navier-Stokes
\end{abstract}

\section{Introduction}
 \setcounter{equation}{0}
 In this paper, we consider mathematical models  describing the
 dynamics of  oxygen, swimming bacteria, and
 viscous incompressible fluids in
$\bbr^d$, with $d=2,\,3$. Bacteria or microorganisms often live in
fluid, in which the biology of chemotaxis is intimately related to
the surrounding physics.
Such a model was proposed by Tuval et al.\cite{TCDWKG} to describe
the dynamics of swimming bacteria, {\it Bacillus subtilis}. We
consider the following equations in \cite{TCDWKG} and set $Q_{T} =
(0,\, T] \times \R^{d}$ with $d=2,\, 3$:
 \begin{equation}\label{KSNS} \left\{
\begin{array}{ll}
\partial_t n + u \cdot \nabla  n - \Delta n= -\nabla\cdot (\chi (c) n \nabla c),\\
\vspace{-3mm}\\
\partial_t c + u \cdot \nabla c-\mu\Delta c =-k(c) n,\\
\vspace{-3mm}\\
\partial_t u + u\cdot \nabla u -\Delta u +\nabla p=-n \nabla
\phi,\quad
\nabla \cdot u=0 
\end{array}
\right. \quad\mbox{ in }\,\, (x,t)\in \R^d\times (0,\, T],
\end{equation}
where $c(t,\,x) : Q_{T} \rightarrow \R^{+}$, $n(t,\,x) : Q_{T}
\rightarrow \R^{+}$, $u(t,\, x) : Q_{T} \rightarrow \R^{d}$ and
$p(t,x) :  Q_{T} \rightarrow \R$ denote the oxygen concentration,
cell concentration, fluid velocity, and scalar pressure,
respectively. The nonnegative function $k(c)$ denotes the oxygen
consumption rate, and the nonnegative function $\chi (c)$ denotes
chemotactic sensitivity. Initial data are given by $(n_0(x), c_0(x),
u_0(x))$. We study both cases that either $\mu=1$ or $\mu=0$ in the
equation of oxygen and, for convenience, the case $\mu=1$ of
\eqref{KSNS} is called parabolic Keller-Segel-Navier-Stokes
equations (abbreviated to P-KSNS) and the case $\mu=0$ is referred
as partially parabolic-hyperbolic Keller-Segel-Navier-Stokes
equations (abbreviated to PH-KSNS). We will refer the system to the
Keller-Segel-Stokes equations (abbreviated to PH-KSS) if  the
convection term $u\cdot \na u$ is absent in
$(1.1)_3$.

To describe the fluid motions, we use Boussinesq approximation to
denote the effect due to heavy bacteria. The time-independent
function $\phi =\phi (x)$ denotes the potential function produced by
different physical mechanisms, e.g., the gravitational force or
centrifugal force. Thus, $\phi(x)=ax_{d}$ is one example of gravity
force, and $\phi(x)=\phi(|x|)\rightarrow 0$ as $|x| \rightarrow \infty$ is an example of  centrifugal force.
\\
\indent The classical model to describe the motion
of cells was suggested by Patlak\cite{Patlak} and
Keller-Segel\cite{KS1, KS2}. It consists of a system of the dynamics
of  cell density $n=n(t,x)$ and the concentration of chemical
attractant substance $c=c(t,x)$ and is given as
\begin{equation}\label{KS-nD} \,\,\left\{
 \begin{array}{c}
 n_t=\Delta n-\nabla \cdot(n \chi\nabla c),\\
 \vspace{-3mm}\\
 \alpha c_t=\Delta c-\tau c+n,\\
 \end{array}
 \right.
\end{equation}
where $\chi$ is the sensitivity and $\tau^{-\frac12}$ represents the
activation length. The system \eqref{KS-nD} has been extensively
studied by many authors and we will not try to give list of results
here (see e.g. \cite{Her-Vela, Horst-Wang, NSY, OY, Win} and
references therein).
\\ \indent
Our main objective of this paper is to present blow-up criteria of
\eqref{KSNS} in two or three dimensions, unless solutions exist
globally in time (see Theorem \ref{Theorem3} and Theorem
\ref{blowup2} below), and to establish global existence of regular
solutions and their decay properties, when certain norm of initial
data is sufficiently small (see Theorem \ref{Theorem4} and Theorem
\ref{Theorem6} below).

We mention previously known results related to ours. In \cite{Lorz}
local existence of solutions was shown in three dimensional bounded
domains and \cite{DLM} proved the global-in-time existence of the
smooth solutions when initial data are close to constant states in
$\R^3$ and $\chi(\cdot), k(\cdot)$ satisfy certain conditions. More
precisely, \cite{DLM} showed that if initial data
$\|(n_0-n_{\infty},\, c_0,\, u_0)\|_{H^3}$ is sufficiently small,
then there exists a unique global solution, provided that
\begin{equation}\label{Assumption1}
\chi'(\cdot)\ge 0,\quad k'(\cdot) >0,\quad
\left(\frac{k(\cdot)}{\chi(\cdot)} \right)^{''} <0.
\end{equation}
In the absence of the fluid in \eqref{KSNS}, i.e., $u=0$,
\cite{YTao} showed that there exists a unique, global and bounded
solution if $\chi$ is sufficiently small, dependent upon
$\norm{c_0}_{L^{\infty}(\R^d)}$.





For two dimensional case, in \cite{Liu-Lorz}, Liu and Lorz showed
the global existence of a weak solution in $\R^2$ under the following conditions on $\chi(\cdot)$ and $k(\cdot)$:
\begin{equation}\label{Assumption1-2}
\chi'(\cdot)\ge 0,\quad (\chi(\cdot)k(\cdot))' >0,\quad
\left(\frac{k(\cdot)}{\chi(\cdot)} \right)^{''} <0.
\end{equation}

In two dimensions, Winkler\cite{Win2} proved the
global existence of regular solutions without smallness assumptions
on initial data for bounded domains with boundary conditions
$\partial_{\nu}n=\partial_{\nu} c=u=0$ under the following sign
conditions on $\chi(\cdot)$ and $k(\cdot)$:
\begin{equation}\label{Assumption2}
\left( \frac{k(\cdot)}{\chi(\cdot)} \right)^{'} >0, \quad
(\chi(\cdot) k(\cdot))' \ge 0, \quad \left(
\frac{k(\cdot)}{\chi(\cdot)} \right) ^{''} \le 0.
\end{equation}
%
%
In \cite{ckl} the authors of the paper established global existence
of smooth solutions in $\R^2$ 
with no
smallness of the initial data and a certain conditions, motivated by
experimental results in \cite{CFKLM} and \cite{TCDWKG}, on
$\chi(\cdot)$ and $k(\cdot)$ (compare to \eqref{Assumption2}), that
is,
\begin{equation} \label{CKL10-march14}
\chi(c),\, k(c),\, \chi'(c),\, k'(c)\geq 0,\,\mbox {and }\, \sup |
\chi (c)- \mu k(c)| < \epsilon\,\,\mbox{ for some }\,\mu>0.
\end{equation}
Construction of weak solutions in $\R^3$ was also discussed in
\cite{ckl} with replacement of $\abs{\chi (c)- \mu k(c)}=0$ in
\eqref{CKL10-march14}. We refer to \cite{FLM}, \cite{Tao-W} and
\cite{CKK} and references therein for the nonlinear diffusion models
of a porous medium type $\Delta n^m$, instead of $\Delta n$.

As mentioned earlier, our main motivation is to study existence of
regular solutions of \eqref{KSNS} when certain norm of initial data
is small. To be more precise, we show that in case $\mu=1$, if
$\norm{c_0}_{L^{\infty}}$ is small, then solutions become regular in
$\R^d$, $d=2,3$ and satisfy a certain degree of decay in time (when
$d=3$, Stokes system is under our consideration for fluid
equations). On the other hand, in case $\mu=0$, we establish local
solutions in time, and then global solutions with time decay if
$\norm{n_0}_{L^{\frac{d}{2}}}$ is small. We first consider the case
(P-KSF) in \eqref{KSNS}. Local-in-time existence of classical
solutions for \eqref{KSNS} was established in Theorem $1$ in
\cite{ckl} such that $(n,c,u) \in L^{\infty}(0,T;
H^{m-1}(\bbr^d)\times H^m(\bbr^d) \times H^m(\bbr^d))$ for some
$T>0$ and $m\ge 3$. We present some blow-up criteria of local
classical solutions, unless the maximal existence time is infinite.

\begin{theorem}\label{Theorem3}
Let the initial data $(n_0, c_0, u_0)$ be given in $H^{m-1}(\bbr^d)\times H^m (\bbr^d)\times H^m(\bbr^d)$
for $m\geq 3$ and $d=2,3$. Assume that $\chi, k, \chi', k'$ are all non-negative and
 $\chi$, $k\in C^m(\R^+)$ and $k(0)=0$, $\|
\nabla^l \phi \|_{L^{\infty}}<\infty$ for $1\le |l|\le m$. If the
maximal time of existence, $T^*$, in Theorem~$1$ in \cite{ckl}, is
finite, then one of the following  is true in each case of $\R^2$ or
$\R^3$, respectively:
\begin{equation}\label{CK10L-march13}
(2D)\qquad
\norm{n}_{L^q(0,T^*;L^{p}(\R^2))}=\infty,\qquad\frac{2}{p}+\frac{2}{q}=2,\,\,1<l\leq
\infty.
\end{equation}
\begin{equation}\label{CK20L-march13}
(3D)\qquad \norm{u}_{L^{\gamma}(0,T^*;L^{\beta}(\R^3))}+
\norm{n}_{L^q(0,T^*;L^{p}(\R^3))}=\infty,
\end{equation}
where
\[
\frac{3}{\beta}+\frac{2}{\gamma} \leq 1,\,\,3<\beta\leq \infty,\quad
\frac{3}{p}+\frac{2}{q}=2,\,\,\frac{3}{2}<p\leq \infty.
\]
If fluid equation is the Stokes system for $(3D)$,
$\norm{u}_{L^q(0,T^*;L^{p}(\R^3))}$ in \eqref{CK20L-march13} is
dropped.
\end{theorem}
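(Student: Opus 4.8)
I would argue by contraposition: \emph{assuming} that the right-hand side of \eqref{CK10L-march13} (in $2$D), resp.\ of \eqref{CK20L-march13} (in $3$D), is finite on $[0,T^*)$, the goal is to show that $E(t):=\norm{n(t)}_{H^{m-1}}^2+\norm{c(t)}_{H^m}^2+\norm{u(t)}_{H^m}^2$ stays bounded as $t\uparrow T^*$; since the local existence time in Theorem~1 of \cite{ckl} depends only on this norm, the solution would then extend beyond $T^*$, contradicting maximality. (Recall $\mu=1$ here.) The first step is a package of preliminary uniform bounds: because $k\ge0$, $k(0)=0$ and $n\ge0$, the source $-k(c)n$ in $(1.1)_2$ is nonpositive and vanishes where $c=0$, so the maximum principle gives $0\le c(t,x)\le\norm{c_0}_{L^\infty}$ on $[0,T^*)$; hence $\chi(c),k(c)$ and their derivatives up to order $m$ are bounded along the trajectory. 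Integrating $(1.1)_1$ and using $\nabla\cdot u=0$ together with the fact that the chemotactic term is a divergence gives mass conservation $\norm{n(t)}_{L^1}=\norm{n_0}_{L^1}$, and the basic $L^2$ energy estimate for $(1.1)_3$ then controls $u$ in $L^\infty_tL^2\cap L^2_t\dot H^1$.

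Next I would upgrade the integrability of $n$, which is the core of the argument. Testing $(1.1)_1$ with $n^{r-1}$ for $2\le r<\infty$ kills the transport term by incompressibility; integrating the chemotactic contribution by parts turns it into $-\tfrac{r-1}{r}\int n^r\bke{\chi'(c)\abs{\nabla c}^2+\chi(c)\Delta c}$, into which I substitute $\Delta c=\partial_t c+u\cdot\nabla c+k(c)n$ from $(1.1)_2$. The $\partial_t c$ piece is absorbed by an integration by parts in time, producing a genuine time derivative plus a lower-order remainder — the same cancellation structure already exploited in \cite{ckl} and \cite{Win2}. The remaining terms $\int\chi'(c)\abs{\nabla c}^2 n^r$, $\int\chi(c)(u\cdot\nabla c)n^r$, $\int\chi(c)k(c)n^{r+1}$ I would control by Gagliardo--Nirenberg interpolation against the dissipation $\norm{\nabla n^{r/2}}_{L^2}^2$ and the conserved mass, by parabolic estimates bounding $\nabla c$ through $(1.1)_2$ in terms of lower norms of $n$ and $u$, and — crucially — by the assumed bound $n\in L^q(0,T^*;L^p)$ with $\tfrac{d}{p}+\tfrac{2}{q}=2$, together in $3$D with the assumed bound on $u$, which are exactly what make the time-dependent coefficients multiplying $E$ integrable on $[0,T^*)$. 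Gronwall then yields $n\in L^\infty(0,T^*;L^r)$ for all $r<\infty$, and a further iteration (parabolic $L^p$--$L^q$ smoothing, or Moser iteration) upgrades this to $n\in L^\infty(0,T^*;L^\infty)$.

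With $n$ bounded, parabolic estimates for $(1.1)_2$ control $\nabla c$ and $D^2 c$ in strong norms, in particular $c\in L^2(0,T^*;H^{m+1})$; in the $3$D Navier--Stokes case the assumed Serrin bound $u\in L^\gamma(0,T^*;L^\beta)$ with $\tfrac{3}{\beta}+\tfrac{2}{\gamma}\le1$, $\beta>3$, combined with the now-controlled forcing $n\nabla\phi$, gives by the standard regularity theory for forced Navier--Stokes that $u$ stays bounded in $H^m$ up to $T^*$; for the Stokes system $(1.1)_3$ is linear and this step is trivial, which is why the $u$-norm is absent from the Stokes version of \eqref{CK20L-march13}. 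Finally I would run the top-order energy estimate: apply $\partial^\alpha$ with $\abs{\alpha}\le m-1$ to $(1.1)_1$ and with $\abs{\alpha}\le m$ to $(1.1)_2$, $(1.1)_3$, pair with $\partial^\alpha n$, $\partial^\alpha c$, $\partial^\alpha u$, estimate the nonlinear terms by the Kato--Ponce and Moser product/commutator inequalities using the bounds above plus the parabolic smoothing $n\in L^2(0,T^*;H^m)$, and control $-n\nabla\phi$ by the hypotheses on $\phi$. This produces $\tfrac{d}{dt}E\le A(t)E+B(t)$ with $A,B\in L^1(0,T^*)$ (again by the Serrin-type assumptions), whence $E$ is bounded on $[0,T^*)$ and the contradiction is complete.

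The step I expect to be the main obstacle is the integrability bootstrap for $n$: since $\nabla c$ is accessible only through $(1.1)_2$, i.e.\ in terms of $n$ itself, there is a genuine nonlinear feedback, and the term $\int\chi(c)k(c)n^{r+1}$ (equivalently $\int\abs{\nabla c}^2 n^r$) is exactly borderline. It is the Keller--Segel $L^{d/2}$-criticality — encoded in the relation $\tfrac{d}{p}+\tfrac{2}{q}=2$, whose endpoint $(p,q)=(d/2,\infty)$ is precisely $\norm{n}_{L^\infty_tL^{d/2}}$ — that makes this term closeable, and without such a bound on $n$ the estimate genuinely fails. In $3$D the Navier--Stokes nonlinearity is the second difficulty, which is exactly what the Serrin-type hypothesis on $u$ neutralizes, and which is absent (hence the dropped term) in the Stokes case.
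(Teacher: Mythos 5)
Your overall architecture is sound and is genuinely different from the paper's. You prove a self-contained continuation criterion: bound $E(t)=\norm{n}_{H^{m-1}}^2+\norm{c}_{H^m}^2+\norm{u}_{H^m}^2$ up to $T^*$ and restart the local theory. The paper instead reduces to the blow-up criteria already proved in \cite{ckl} (namely $\int_0^{T^*}\norm{\nabla c}_{L^\infty}^2\,dt=\infty$ in 2D, and its 3D analogue with the Serrin norm of $u$), so it only has to show $\nabla c\in L^2(0,T^*;L^\infty)$ under the assumed finiteness of \eqref{CK10L-march13}/\eqref{CK20L-march13}; this is shorter and avoids the top-order Kato--Ponce estimates entirely, at the price of invoking the earlier result. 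Your identification of the critical term and of the $L^{d/2}$-scaling encoded in $\tfrac dp+\tfrac2q=2$ is exactly right, and your remark about why the $u$-norm disappears in the Stokes case matches the paper.

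The genuine soft spot is the $L^r$ bootstrap for $n$. After testing with $n^{r-1}$ and substituting $\Delta c=\partial_t c+u\cdot\nabla c+k(c)n$, the term $-\tfrac{r-1}{r}\int\chi(c)k(c)n^{r+1}$ is actually harmless (it is nonpositive since $\chi,k\ge0$), and the transport contributions cancel by incompressibility; but the integration by parts in time of the $\partial_t c$ piece, after re-substituting the $n$-equation, reintroduces terms of the form $\int n^{r}\chi^2|\nabla c|^2$ and $\int n^{r-1}\chi\,\nabla n\cdot\nabla c$. These are of the same criticality as the term you started from and require a quantitative bound on $\nabla c$ (essentially $\Delta c\in L^q(0,T^*;L^p)$) that is not yet available at that stage; the ``cancellation structure from \cite{ckl} and \cite{Win2}'' you invoke relies on structural hypotheses such as \eqref{CKL10-march14} or \eqref{Assumption2}, which are not assumed in Theorem~\ref{Theorem3} (only $\chi,k,\chi',k'\ge0$). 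The repair is the order of operations the paper uses: first establish $\Delta c\in L^q(0,T^*;L^p)$ by maximal regularity of the heat equation (the source $-k(c)n-u\cdot\nabla c$ lies in $L^q_tL^p_x$ by the hypothesis on $n$ together with the energy/vorticity bounds on $u$ --- in 2D for $1<p<2$ one first needs the entropy estimate on $\int n\ln n$ to reach $n\in L^2_{t,x}$), and only then run the $\int n^r$ estimate against $\norm{\Delta c}_{L^p}\norm{n^r}_{L^{p/(p-1)}}$ as in \eqref{CKL100-march12}. You should also be explicit about the four-range case analysis needed to close $\norm{u\nabla c}_{L^q_tL^p_x}$ in 3D, which your ``parabolic estimates'' gloss conceals and which is where most of the work in the paper's 3D argument lies.
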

The proof of Theorem \ref{Theorem3} will be given in section 2.
\begin{remark}
We remind the following scaling invariance of \eqref{KSNS}:
\begin{equation}\label{C10KL-march15}
n_R(t,x) : = R^2 n(R^2t, Rx), \quad c_R(t,x) = c(R^2t, Rx), \quad u_R
(t,x) = R u(R^2t, Rx)
\end{equation}
and observe that \eqref{CK10L-march13} and \eqref{CK20L-march13} are
invariant functionals under the scaling \eqref{C10KL-march15}. For
the limiting case $(l,m)=(1,\infty)$ in \eqref{CK10L-march13}, due
to conservation of total mass, we note that
$\norm{n}_{L^{\infty}(0,t;L^{1}(\R^2))}=\norm{n_0}_{L^1(\R^2)}<\infty$
for any $t<T^*$. In Proposition \ref{l1small} in section 2, we prove
that if $\norm{n_0}_{L^1(\R^2)}$ is sufficiently small, blow-up does
not occur in a finite time. We, however, leave an open question
whether or not singularity may develop for large $L^1$ norm of
$n_0$.
\end{remark}
\begin{remark}
Liu and Lorz\cite{Liu-Lorz} showed global-in-time existence of weak solution to \eqref{KSNS} in two dimensional case under the assumption \eqref{Assumption1-2}. Since their weak solution satisfies integrability $n \in L^2(0, T; L^2(\R^2))$ for any $T>0$, which is a special case in \eqref{CK10L-march13}, their weak solution is, in fact, a classical solution if $(n_0, c_0, u_0)\in H^{m-1}(\bbr^d)\times H^m (\bbr^d)\times H^m(\bbr^d)$ for $m\geq 3$ as a consequence of Theorem \ref{Theorem3}.
\end{remark}

The second result is the existence of regular solutions under the
assumption that $\norm{c_0}_{L^{\infty}}$ is sufficiently small.
\begin{theorem}\label{Theorem4}
Let the assumptions in Theorem \ref{Theorem3} hold. We consider the
Navier-Stokes equations in $\R^2$ and the Stokes system in $\R^3$ in
$\eqref{KSNS}_3$. There exists a constant $ \delta>0 $ such that if
$\norm{c_0}_{L^{\infty}(\R^d)}<\delta$, then classical solution of
\eqref{KSNS} exists globally. Furthermore, $n$ and $c$ satisfy the
following time decay:
\begin{equation}\label{CK30L-march14}
\| n(t)\|_{L^{\infty}(\R^d)} + \|c(t)\|_{L^{\infty}(\R^d)} \le
C(1+t)^{-\frac d4},\qquad d=2, 3.
\end{equation}
\end{theorem}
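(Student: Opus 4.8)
Since Theorem~\ref{Theorem3} reduces finite-time blow-up to unboundedness of $n$ in the scaling-critical space-time norms of \eqref{CK10L-march13}--\eqref{CK20L-march13}, the plan is to show that the smallness of $\delta:=\norm{c_0}_{L^{\infty}}$ forces an a priori bound on $n$ in such a norm; then $T^*=\infty$ and, with the global solution in hand, the decay \eqref{CK30L-march14} is extracted from the Duhamel formulas. Throughout I will treat $\delta$ as small \emph{relative to the (fixed) norms of $(n_0,u_0)$}, which is all the statement requires.

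\textbf{Step 1: the oxygen stays small.} Because $n\ge0$ and $k(0)=0$, $k'\ge0$ give $k(c)\,n\ge0$, the maximum principle for $\pa_t c+u\cdot\na c-\Del c=-k(c)n$ (recall $\na\cdot u=0$) yields $0\le c(t,x)\le\delta$ on $[0,T^*)$. Hence along the solution $\chi(c)\le M:=\sup_{[0,\delta]}\chi$, while $k(c)\le L\delta$ and $\Psi(c):=\int_0^c\chi(s)\,ds\le M\delta$ with $L:=\sup_{[0,\delta]}k'$; the decisive point is that $k(c)$ and $\Psi(c)$ are $O(\delta)$.

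\textbf{Step 2: a closed loop of a priori estimates for $u$, $\na c$, $n$.} (a) Testing $\eqref{KSNS}_3$ with $u$, using $\norm{\na\phi}_{L^{\infty}}<\infty$ and the conserved mass $\norm{n(t)}_{L^1}=\norm{n_0}_{L^1}$, one gets a control of $u$ in $L^{\infty}(0,t;L^2)\cap L^2(0,t;\dot H^1)$ in terms of the $n$-norms of part (c); in $3$D we use that $\eqref{KSNS}_3$ is the Stokes system, so this step is linear and needs no smallness, and in $2$D the convective term is absorbed by the standard energy argument. (b) Testing the oxygen equation with $-\Del c$ and using $k(c)\le L\delta$ gives $\na c\in L^{\infty}(0,t;L^2)$ and $\Del c\in L^2(0,t;L^2)$, hence control of $\na c$ in $L^2(0,t;L^4)$ (in $2$D), with the forcing $k(c)n$ of this estimate weak because $\delta$ is small. (c) The heart of the matter: test the $n$-equation with $n^{p-1}$ ($p=2$ if $d=2$, $p=\tfrac d2$ if $d=3$) to obtain
\[
\tfrac1p\ddt\norm{n}_{L^p}^p+(p-1)\int_{\R^d} n^{p-2}|\na n|^2=-\tfrac{p-1}{p}\int_{\R^d} n^p\bigl(\chi'(c)|\na c|^2+\chi(c)\Del c\bigr),
\]
and then substitute $\Del c=\pa_t c+u\cdot\na c+k(c)n$ from the oxygen equation. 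The term $n^p\chi(c)(\pa_t c+u\cdot\na c)=n^p(\pa_t+u\cdot\na)\Psi(c)$ is integrated by parts in time (invoking the $n$-equation) and in space (using $\na\cdot u=0$); this produces a correction functional $\tfrac{p-1}{p}\int_{\R^d} n^p\Psi(c)$, comparable to $\norm{n}_{L^p}^p$ since $\Psi(c)=O(\delta)$, together with terms that are either of favourable sign (the contributions of $\chi'\ge0$, of $k(c)\,n^{p+1}\ge0$, and of $\Psi(c)|\na n^{p/2}|^2\ge0$) or carry a factor $\Psi(c)$ or $k(c)$, hence $O(\delta)$, or are of the form $\int n^{p+1}$, which is handled through Gagliardo--Nirenberg combined with the conserved $L^1$-mass and, in $3$D, with the decay ansatz of Step~3 run simultaneously to defeat the supercriticality of the cubic term. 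Choosing $\delta$ small lets every unfavourable term be absorbed into $\norm{\na n^{p/2}}_{L^2}^2$ and $\norm{\na u}_{L^2}^2$, leading to a differential inequality of the shape
\[
\ddt\Bigl(\norm{n}_{L^p}^p+\tfrac{p-1}{p}\int_{\R^d} n^p\Psi(c)\Bigr)+c_*\norm{\na n^{p/2}}_{L^2}^2\le 0 .
\]
Consequently $\norm{n(t)}_{L^p}\le C\norm{n_0}_{L^p}$ uniformly on $[0,T^*)$ and $n^{p/2}\in L^2(0,T^*;\dot H^1)$; interpolating with the Sobolev embedding places $n$ in $L^q(0,T^*;L^r(\R^d))$ with a pair obeying the critical relation of \eqref{CK10L-march13} (resp. \eqref{CK20L-march13}, Stokes form). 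By Theorem~\ref{Theorem3}, $T^*=\infty$ and all the bounds above are global.

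\textbf{Step 3: temporal decay.} With global existence and the uniform bounds of Step~2, rewrite $n$, $c$, $u$ through Duhamel with the heat and Stokes semigroups and run a bootstrap on the weighted norms $(1+t)^{d/4}\norm{\cdot}_{L^r}$: the smoothing $\norm{e^{t\Del}f}_{L^r}\lesssim t^{-\frac d2(1-1/r)}\norm{f}_{L^1}$, mass conservation for $n$, $\norm{c(t)}_{L^{\infty}}\le\delta$, the smallness of the $c$-driven chemotactic coupling (which in Kato-type weighted norms is controlled by $\norm{c_0}_{L^{\infty}}$, hence $O(\delta)$), and a Fourier-splitting (Schonbek) argument supplying the $L^2$ seed together close the estimates and give $\norm{n(t)}_{L^{\infty}}+\norm{c(t)}_{L^{\infty}}\le C(1+t)^{-d/4}$.

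\textbf{Expected main obstacle.} The delicate point is Step~2(c): bounding the chemotactic term $-\na\cdot(\chi(c)n\na c)$ with no smallness imposed on $n_0$ or $u_0$. This closes only because, after eliminating $\Del c$ via the oxygen equation and integrating the material-derivative term by parts, every term that is not sign-favourable is multiplied by $\Psi(c)$ or $k(c)$ — both $O(\delta)$ — or is the superlinear $\int n^{p+1}$, which is scaling-critical at $p=d/2$; in $2$D this last term is tamed outright by mass conservation through Gagliardo--Nirenberg, whereas in $3$D it is supercritical and must be controlled together with the decay of $\norm{n(t)}_{L^2}$ in a joint bootstrap, and the fluid estimates must be decoupled by taking the Stokes system. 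Selecting the exponent $p$ and the correction functional $\int n^p\Psi(c)$ correctly is exactly what makes the loop close.
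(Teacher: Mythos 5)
Your Step 1 and the overall architecture (a uniform-in-time $L^p$ bound on $n$ rules out blow-up via Theorem \ref{Theorem3}, followed by a separate decay argument) agree with the paper, but Step 2(c) --- the heart of your proof --- has a genuine gap. After replacing $\Delta c$ by $c_t+u\cdot\nabla c+k(c)n$ and integrating the material derivative of $\Psi(c)$ by parts against $n^p$, you must substitute $n_t$ from the $n$-equation inside $\int \Psi\,\partial_t(n^p)$. The chemotactic flux then contributes $-p\int\chi(c)\, n\,\nabla c\cdot\nabla\big(\Psi(c) n^{p-1}\big)$, and when the gradient falls on $\Psi$ the factor $\Psi'=\chi$ appears in place of $\Psi$: you are left with a term of the form $c_p\int \chi^2(c)\,n^p|\nabla c|^2$ whose coefficient is bounded but \emph{not} $O(\delta)$, and there is no good term proportional to $\int n^p|\nabla c|^2$ on the left-hand side to absorb it (the term $\int n^{p-1}\chi\nabla c\cdot\nabla n$ produced by $\int\Psi n^{p-1}\Delta n$ causes the same problem after Cauchy--Schwarz). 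So your claim that every unfavourable term carries a factor $\Psi(c)$ or $k(c)$ is false: a correction functional $\int n^p\Psi(c)$ that is linear in the weight cannot generate the quadratic-in-$\nabla c$ good term that is required. The paper's Proposition \ref{alemma} closes exactly this loop by estimating $\int n^p\phi(c)$ with the convex weight $\phi(c)=e^{(\beta c)^2}$: integration by parts produces $+\frac1p\int n^p\phi''(c)|\nabla c|^2$ on the left, and the conditions \eqref{1-3} (with $\beta^2\sim p^2\chi_1^2$ large and $\beta^2\|c_0\|_{L^\infty}^2$ small) let $\phi''$ dominate $(\phi')^2/\phi$, $\chi^2\phi$ and $\chi\phi'$. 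That is where the smallness of $\|c_0\|_{L^\infty}$ actually enters; your scheme would have to be iterated and resummed into precisely such a weight to close. (Note also that in the paper's computation no bad $\int n^{p+1}$ term appears at all --- the term $-\frac1p\int n^p\phi' n k$ is non-positive and dropped --- so the $2$D/$3$D ``supercriticality'' you struggle with in Step 2(c) is an artifact of your scheme.)

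Step 3 likewise departs from the paper, which derives \eqref{CK30L-march14} by a De~Giorgi level-set iteration in the style of Perthame--Vasseur: the truncated analogue of the weighted inequality, \eqref{ee1}, has \emph{no} source term, and the resulting differential inequality for the level-set energy $U(\xi)$ forces $U$ to vanish at a finite level $\xi=M$ with $\eta(t)=(1+t)^{-d/4}$, which is what produces the rate $t^{-d/4}$ without any smallness of $n_0$ or $u_0$. Your Duhamel/Fourier-splitting plan is not obviously repairable as sketched: the coupling $\nabla\cdot(\chi(c)n\nabla c)$ is not $O(\delta)$ in the weighted norms you invoke, since $\nabla c$ is governed by $\|\nabla c_0\|$ (not assumed small) and by the source $k(c)n$, so no smallness survives in the nonlinearity once $n_0$ and $u_0$ are large.
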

The proof of Theorem \ref{Theorem4} will be given in section 3. Next
we study (PP-KSF), namely
\begin{align}\label{adKS}
\begin{aligned}
\begin{cases}
\pa_t n +  u\cdot \na n - \Delta n = - \na\cdot [\chi(c)n\na c] \\
\pa_t c +u\cdot\na n =- k(c)n\\
\partial_t u + u\cdot \nabla u -\Delta u +\nabla p=-n \nabla
\phi,\quad
\nabla \cdot u=0.
\end{cases}
\end{aligned}\quad t>0,\,\, x\in \bbr^d\end{align}
When the fluid is absent, the Keller-Segel equations with chemical
of ODE type, typically referred to the angiogenesis system, has been
studied in \cite{CPZ1,CPZ2,CPZ3} and \cite{PeVa}:
\begin{equation}\label{degKS}
\pa_t n  - \Delta n = - \na\cdot [\chi(c)n\na c],\quad \pa_t c  =-
c^mn\qquad t>0,\,\, x\in \bbr^d.
\end{equation}
%

In section 4, we show local classical solution of \eqref{adKS} by
the usual iteration method and present blow-up criteria of
\eqref{adKS}, if a finite time singularity occurs. Now we state the
third main result.
\begin{theorem}\label{blowup2}
Let $m\ge 3$ and $d=2,3$. Assume that $\chi(\cdot), k (\cdot) \in
C^{m}(\bbr)$, $\norm{\nabla^l \phi}_{L^{\infty}}<\infty$ for
$1\le\abs{l}\le m$. Then there exists $T^*$, the maximal existence
time, such that if $(n_0, c_0, u_0) \in H^{m}(\bbr^d)\times
H^{m+1}(\bbr^d) \times H^m(\bbr^d)$, then there exists a unique
classical solution of \eqref{adKS} satisfying for any $t<T^*$
\[
(n,c,u) \in C(0,t; H^{m}(\bbr^d)\times H^{m+1}(\bbr^d) \times
H^m(\bbr^d)),
\]
\[
(n,u) \in L^2 (0,t; H^{m+1}(\bbr^d) \times H^{m+1}).
\]
Furthermore,
if $T^{*}<\infty$, then one of the following  is true in each case
of $\R^2$ or $\R^3$, respectively:
\begin{equation}\label{CKL10-april3}
(2D)\qquad
\norm{n}_{L^2(0,T^*;L^{\infty}(\R^2))}=\infty
\end{equation}
\begin{equation}\label{CKL20-april3}
(3D)\qquad\norm{u}_{L^{\gamma}(0,T^*;L^{\beta}(\R^3))}+\norm{n}_{L^2(0,T^*;L^{\infty}(\R^3))}=\infty,
\qquad \frac{3}{\beta}+\frac{2}{\gamma} \leq 1,\,\,3<\beta\leq
\infty.
\end{equation}
If fluid equation is the Stokes system for $(3D)$,
$\norm{u}_{L^q(0,T^*;L^{p}(\R^3))}$ in \eqref{CKL20-april3} is
dropped.
\end{theorem}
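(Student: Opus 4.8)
The plan is the standard two-step programme: first construct the local solution by a linearised iteration, and then establish the continuation (blow-up) criterion by showing that, as long as the indicated space--time norm of $n$ (and of $u$, in $\R^3$) stays finite, the $H^m\times H^{m+1}\times H^m$ norm of $(n,c,u)$ cannot blow up before $T^*$. For \textbf{local existence} I would iterate by freezing the transport and chemotactic coefficients: given $(n^j,c^j,u^j)$, let $n^{j+1}$ solve the linear parabolic equation $\pa_t n^{j+1}+u^j\cdot\na n^{j+1}-\Delta n^{j+1}=-\na\cdot(\chi(c^j)n^{j+1}\na c^j)$, let $c^{j+1}$ solve the linear transport equation $\pa_t c^{j+1}+u^j\cdot\na c^{j+1}=-k(c^j)n^{j+1}$, and let $u^{j+1}$ solve the (Navier--)Stokes or Stokes system with forcing $-n^{j+1}\na\phi$. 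Uniform bounds on a short interval $[0,T]$ follow from three energy estimates: for $n^{j+1}$ and $u^{j+1}$ one differentiates up to order $m$, uses $\na\cdot u^j=0$ to delete the top-order transport term, absorbs $\tfrac14\|\na\pa^\al n^{j+1}\|_{L^2}^2$ and $\tfrac14\|\na\pa^\al u^{j+1}\|_{L^2}^2$ into the diffusion (which also yields the asserted $L^2(0,T;H^{m+1})$ bounds), estimates the commutators and products by the Kato--Ponce and Moser inequalities and the algebra property of $H^m$ (valid since $m\ge3>d/2$), and bounds the forcing by $\|n^{j+1}\na\phi\|_{H^m}\lesssim C_\phi\|n^{j+1}\|_{H^m}$ via the assumed bounds on the derivatives of $\phi$; for $c^{j+1}$ one differentiates up to order $m+1$, again uses $\na\cdot u^j=0$, and estimates $\pa^\al(k(c^j)n^{j+1})$ in $L^2$ by Moser's product and composition inequalities --- this is exactly where the extra derivative on $c$ is forced, since the $n$-equation requires $\na c\in H^m$. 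An $L^\infty$-bound on $c^{j+1}$, obtained by integrating $\dot c^{j+1}=-k(c^j)n^{j+1}$ along the flow of $u^j$ and using Grönwall, keeps the composition constants uniform. Taking $T$ small gives an invariant ball; the iterates converge in a weaker norm (losing one derivative on each unknown, the loss being compensated by the parabolic smoothing of $n$ and $u$), weak-$*$ compactness promotes the limit to the stated regularity, and uniqueness comes from the same difference estimate.

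For the \textbf{continuation criterion} I argue by contraposition: assume $T^*<\infty$, put $A:=\|n\|_{L^2(0,T^*;L^\infty)}$, and suppose $A<\infty$ (in $\R^3$ assume in addition $\|u\|_{L^\ga(0,T^*;L^\be)}<\infty$ with $\tfrac3\be+\tfrac2\ga\le1$; nothing extra in the Stokes case). It is enough to bound $Y(t):=\|n(t)\|_{H^m}^2+\|c(t)\|_{H^{m+1}}^2+\|u(t)\|_{H^m}^2$ on $[0,T^*)$, for then the local theory continues the solution beyond $T^*$, contradicting maximality. I would organise the bound as a bootstrap. (i) From the transport equation for $c$ and $\int_0^{T^*}\|n\|_{L^\infty}\le(T^*)^{1/2}A<\infty$, a (generalised) Grönwall estimate on $\|c(t)\|_{L^\infty}$ gives $\|c\|_{L^\infty(Q_{T^*})}<\infty$, keeping the constants $C(\|c\|_{L^\infty})$ in all composition estimates under control. (ii) Conservation of mass, $\|n(t)\|_{L^1}=\|n_0\|_{L^1}$, with interpolation gives $\|n(t)\|_{L^p}\lesssim\|n(t)\|_{L^\infty}^{1-1/p}$, so $n\na\phi\in L^2(0,T^*;L^p)$ for every finite $p$; testing the fluid equation against $u$, and in $\R^2$ also the vorticity equation against $\om$, and using $\na\cdot u=0$, yields $u\in L^\infty(0,T^*;L^2)\cap L^2(0,T^*;H^1)$, and in $\R^2$ moreover $u\in L^\infty(0,T^*;H^1)\cap L^2(0,T^*;H^2)$; in $\R^3$ one additionally invokes the Serrin bound to propagate full regularity of the Navier--Stokes flow (for Stokes this is automatic, the equation being linear). (iii) Parabolic maximal regularity for the $u$-equation gives $\na u\in L^2(0,T^*;L^\infty)$; inserting this into the $n$-equation, regarded as a linear parabolic equation for $n$ with drift $u+\chi(c)\na c$ and zeroth-order coefficient $\chi'(c)|\na c|^2+\chi(c)\Delta c$, a continuity/bootstrap argument on $[0,T^*)$ produces $\na n\in L^2(0,T^*;L^\infty)$, whence the transported equation $\pa_t\na c+u\cdot\na(\na c)=-(\na u)^{\!\top}\na c-k'(c)n\,\na c-k(c)\na n$ gives $\na c\in L^\infty(0,T^*;L^\infty)$ by Grönwall. (iv) Finally, differentiating the three equations to top order, using $\na\cdot u=0$, the Kato--Ponce and $H^m$-algebra/Moser estimates, and absorbing the diffusion, one reaches a differential inequality
\[
\ddt Y(t)\ \le\ C\big(\|c\|_{L^\infty}\big)\,\Phi(t)\,Y(t),\qquad \Phi:=1+\|n\|_{L^\infty}^2+\|\na n\|_{L^\infty}^2+\|\na u\|_{L^\infty}^2+\|\na c\|_{L^\infty}^2,
\]
and since $\Phi\in L^1(0,T^*)$ by (i)--(iii), Grönwall bounds $Y$ on $[0,T^*)$, which completes the argument.

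The step I expect to be \textbf{the main obstacle} is (iii): because the oxygen equation is purely hyperbolic, $c$ and its derivatives gain no smoothing, so upgrading $n\in L^2_tL^\infty_x$ to $\na n\in L^2_tL^\infty_x$ (and hence $\na c,\ \na^2c\in L^\infty_tL^\infty_x$) requires a continuity argument that plays the parabolic smoothing of $n$ and $u$ against the transport estimates for $c$ and exploits the precise form of the chemotactic flux $\na\cdot(\chi(c)n\na c)$. A secondary difficulty, present only in $\R^3$, is the fluid nonlinearity $u\cdot\na u$: controlling it in the top-order estimate forces either the Serrin-type hypothesis on $u$ or the reduction to the Stokes system --- which is exactly why the norm of $u$ appears in \eqref{CKL20-april3} but is absent from \eqref{CKL10-april3}.
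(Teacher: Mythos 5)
Your local-existence argument is essentially the paper's (same linearized iteration, same $H^s$ energy estimates, contraction in a weaker norm), and the overall architecture of your continuation criterion --- contraposition, $L^\infty$ control of $c$, energy and vorticity estimates for $u$, maximal regularity for the Stokes part to obtain $\nabla u\in L^2(0,T^*;L^\infty)$, then induction on the number of derivatives --- also matches. The genuine gap is your step (iii). You propose to upgrade $n\in L^2_tL^\infty_x$ to $\nabla n\in L^2_tL^\infty_x$ by treating the $n$-equation as a linear parabolic equation with drift $u+\chi(c)\nabla c$ and zeroth-order coefficient $\chi'(c)|\nabla c|^2+\chi(c)\Delta c$, and only \emph{afterwards} to deduce $\nabla c\in L^\infty_{t,x}$ from the transport equation. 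This order is circular: the coefficients of that "linear" equation involve $\nabla c$ and $\Delta c$ pointwise, and since the $c$-equation is purely hyperbolic these are controlled only by transporting sources containing $k(c)\nabla n$ and, for $\Delta c$, $\Delta\bigl(k(c)n\bigr)$ --- precisely the quantities you are trying to produce. Nothing in your steps (i)--(ii) supplies the pointwise bounds on $\nabla c,\Delta c$ needed to start the bootstrap, and you offer no mechanism to break the loop; flagging it as "the main obstacle" does not resolve it.

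The fix is that no pointwise bound on $\nabla n$, $\nabla c$ or $\nabla^2c$ is needed at all. After the base step (which yields $\|n\|_{L^\infty_tL^2_x}$, $\|\nabla c\|_{L^\infty_tL^2_x}$, $\|\nabla n\|_{L^2_{t,x}}$ and $\|\nabla u\|_{L^2_tL^\infty_x}$), the induction closes with a Gronwall factor of the form
\[
\|\nabla c\|_{L^2}^2\,\|n\|_{H^m}^2+\|n\|_{L^\infty}^2+\|\nabla u\|_{L^\infty}+\|n\|_{L^\infty}+1,
\]
which is integrable on $(0,T^*)$ because $\|\nabla c\|_{L^2}$ is bounded in time and $\|n\|_{H^m}\in L^2_t$ is supplied by the dissipation of the previous induction step. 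The key product estimate is
\[
\|\chi(c)\,n\,\nabla c\|_{H^m}\le C\,\|\nabla c\|_{L^2}^{1/2}\|\nabla c\|_{H^1}^{1/2}\|n\|_{H^m}^{1/2}\|\nabla n\|_{H^m}^{1/2}+C\,\|n\|_{L^\infty}\bigl(\|\nabla c\|_{H^m}+C\bigr),
\]
which places the top derivatives on $n$ (absorbed by the diffusion) or pairs $\|\nabla c\|_{H^m}$ with $\|n\|_{L^\infty}$ --- exactly the norm your criterion assumes to be square-integrable in time. Restructuring your step (iv) along these lines lets you delete step (iii) entirely, after which the proof closes.
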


Last main result is global existence of regular solutions for
\eqref{adKS} and their decays in time, when $\norm{n_0}_{L^{\frac
d2}(\R^d)}$ is sufficiently small. To be more precise, we obtain the
following:
\begin{theorem}\label{Theorem6}
Let $d=2, 3$ and we consider the Navier-Stokes equations in $\R^2$
and the Stokes system in $\R^3$ in $\eqref{adKS}_3$. Suppose that
$(n_0,c_0,u_0) \in H^{m}(\bbr^d)\times H^{m+1}(\bbr^d) \times
H^m(\bbr^d))$ for $m\ge 3$. Then there exists
$\epsilon_2=\epsilon_2(d, \| c_0\|_{L^{\infty}})$ such that if
$\norm{n_0}_{L^{\frac d2}(R^d)}<\epsilon_2$, then solutions of
\eqref{adKS} become global and classical. Furthermore, $n$ satisfies
the following time decay:
\begin{equation}\label{CK30L-april6}
\norm{n(t)} _{L^{\infty}} \le  C(1+t)^{-1} .
\end{equation}
\end{theorem}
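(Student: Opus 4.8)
\begin{pfthm4}
The plan is to invoke the blow-up criterion of Theorem \ref{blowup2} to reduce the global existence claim to an a priori bound on $\norm{n}_{L^2(0,T^*;L^{\infty}(\R^d))}$, and to produce this bound --- together with the decay \eqref{CK30L-april6} --- by propagating the smallness of $\norm{n_0}_{L^{d/2}}$ through a chain of $L^p$ estimates for $n$. Let $(n,c,u)$ be the local classical solution on $[0,T^*)$ from Theorem \ref{blowup2}. The first observation is a pointwise bound on the oxygen: because $\mu=0$, along the flow $\dot X=u(X,t)$ the second equation of \eqref{adKS} becomes the ODE $\frac{d}{dt}c(X(t),t)=-k(c)\,n\le 0$, whence $0\le c(t,x)\le\norm{c_0}_{L^{\infty}}=:M$ for all $t<T^*$, and by continuity of $\chi,k,\chi',k'$ on $[0,M]$ all coefficient functions below are bounded by a constant depending only on $M$.

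The heart of the argument is a bootstrap for $n$. \emph{Step 1 (critical norm).} One shows $\norm{n(t)}_{L^{d/2}}$ remains comparable to $\norm{n_0}_{L^{d/2}}$. When $d=2$ this is mass conservation: integrating $\eqref{adKS}_1$ and using $\na\cdot u=0$ gives $\norm{n(t)}_{L^1}=\norm{n_0}_{L^1}$. When $d=3$ one runs the $L^{3/2}$ energy estimate: testing $\eqref{adKS}_1$ against $n^{1/2}$, the drift term vanishes by incompressibility, one gains the dissipation $\norm{\na n^{3/4}}_{L^2}^2$, and after one integration by parts and Young's inequality the chemotaxis term is bounded by a small multiple of this dissipation plus $C(M)\int|\na c|^2 n^{3/2}\,dx$; the latter is absorbed via Gagliardo--Nirenberg interpolation and the smallness of $\norm{n_0}_{L^{3/2}}$ (it is exactly the scaling-criticality of $L^{d/2}$ for $n$ that makes this closure possible), with $|\na c|$ controlled by Step $0$ below. \emph{Step 2 ($L^p$ decay).} For $d/2\le p<\infty$, combining $L^p$ energy estimates (equivalently, the Duhamel formula with the heat $L^p$--$L^q$ smoothing estimates) with Step 1 yields
\[
\norm{n(t)}_{L^p(\R^d)}\le C\,(1+t)^{-\frac d2\left(\frac 2d-\frac 1p\right)},
\]
the rate being precisely that of the linear heat flow from $L^{d/2}$ to $L^p$. \emph{Step 3 ($L^{\infty}$).} Feeding the finitely many $L^p$ bounds of Step 2 into one further Duhamel iteration gives $\norm{n(t)}_{L^{\infty}(\R^d)}\le C(1+t)^{-1}$, which is \eqref{CK30L-april6}. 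Consequently $\int_0^{T^*}\norm{n(t)}_{L^{\infty}}^2\,dt\le C\int_0^{\infty}(1+t)^{-2}\,dt<\infty$, and since this quantity is exactly the one appearing in Theorem \ref{blowup2} (the fluid norm is dropped in the Stokes case treated here, and the two-dimensional criterion is likewise just $\norm{n}_{L^2(0,T^*;L^{\infty})}$), the solution cannot blow up in finite time; hence $T^*=\infty$.

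\emph{Step 0 (control of $\na c$ and $u$)} is where the main difficulty lies, since with $\mu=0$ there is no parabolic smoothing on $c$ and the chemotaxis nonlinearity is effectively second order in $c$, so the parabolic-Keller--Segel device of closing against $\mu\Delta c$ is unavailable. Writing $w:=\na c$ and differentiating $\eqref{adKS}_2$ gives
\[
\pa_t w+(u\cdot\na)w+(\na u)^{\!\top}w=-k'(c)\,n\,w-k(c)\,\na n .
\]
In the $L^p$ estimate for $w$ the transport term drops by incompressibility, the reaction term $-k'(c)\,n\,|w|^p$ has the favorable sign, the stretching term $(\na u)^{\!\top}w$ is absorbed using $L^q$-in-space estimates for $\na u$ obtained from the (Navier--)Stokes equation $\eqref{adKS}_3$ with the small, decaying buoyancy forcing $-n\na\phi$ (here $\norm{\na^l\phi}_{L^{\infty}}<\infty$ and the already-established decay of $n$ are used; the two-dimensional Navier--Stokes case, in which $u_0$ is not small, requires the usual care with the convection term), and the remaining coupling term $k(c)\,\na n\cdot w\,|w|^{p-2}$, after an integration by parts, is split by Young's inequality into a part absorbed by the parabolic dissipation coming from the $n$-estimate and a lower-order part. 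Thus the estimates for $n$, for $\na c$ and for $u$ are run simultaneously in a continuity/bootstrap argument on $[0,T^*)$, and the whole system of inequalities closes once $\epsilon_2=\epsilon_2(d,\norm{c_0}_{L^{\infty}})$ is chosen small enough; the essential and delicate point throughout is that every nonlinear interaction must be estimated by a fixed power of the small critical quantity $\norm{n_0}_{L^{d/2}}$, without which the coupled estimates would not close.
\end{pfthm4}
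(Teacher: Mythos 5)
There is a genuine gap, and it is exactly at the point you flag as ``where the main difficulty lies.'' Your reduction to the criterion of Theorem \ref{blowup2} via $\norm{n}_{L^2(0,T^*;L^{\infty})}<\infty$ is the same as the paper's, but your mechanism for producing the decay of $n$ does not close. Since $\mu=0$, the $c$-equation has no dissipation, so in your Step~0 the $L^p$ estimate for $w=\nabla c$ cannot absorb the coupling term $k(c)\,\nabla n\cdot w\,|w|^{p-2}$: integrating by parts moves a derivative onto $w|w|^{p-2}$ and produces $\nabla^2 c$, which is controlled by nothing, while keeping $\nabla n$ requires bounds that are themselves part of the bootstrap; moreover the stretching term needs $\nabla u\in L^1_tL^{\infty}_x$ on an infinite horizon, which is not available since no smallness or decay is assumed on $u_0$, $c_0$ or $\nabla c_0$ (in 2D the full Navier--Stokes velocity is large). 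The same obstruction reappears in your Step~1 in $d=3$: the chemotaxis term leaves you with $C\int n^{3/2}|\nabla c|^2$, whose coefficient is a norm of $\nabla c$, not a power of the small quantity $\norm{n_0}_{L^{3/2}}$, so smallness of the critical norm cannot absorb it; and your Steps~2--3 (Duhamel with heat smoothing) require treating $\nabla\cdot(\chi n\nabla c)$ and $u\cdot\nabla n$ perturbatively, i.e.\ again decay of $\nabla c$ and $u$ that is neither assumed nor proved. In short, the scheme is circular and the closure is asserted, not demonstrated.

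The idea you are missing is the change of variable that makes all of this unnecessary: the paper (following \cite{CPZ2} and \cite{PeVa}) takes $\phi$ with $\phi'(c)=\phi(c)\chi(c)$ and works with $n/\phi(c)$. Lemma \ref{weighted-estimate} shows that in the evolution of $\int (n/\phi)^p\phi$ every term containing $\nabla c$ cancels algebraically, and---the key observation in the fluid-coupled case---every term containing $u$ cancels as well, by incompressibility; what survives in \eqref{auxillary} and \eqref{truncated} involves only $\phi^2\chi k\,(n/\phi)^{p+1}$ and is handled by Sobolev embedding together with smallness of $\norm{n_0}_{L^{d/2}}$ (which is scale-critical). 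The $L^{\infty}$ decay then comes not from semigroup estimates but from the De~Giorgi level-set iteration of \cite{PeVa} applied to the truncated energies $E(\xi),U(\xi)$, combined with the scale invariance of $\norm{n_0}_{L^{d/2}}$ and $\norm{c_0}_{L^{\infty}}$ under \eqref{C10KL-march15} to convert the $T$-dependent bound into the uniform rate $(1+t)^{-1}$; globality then follows from Theorem \ref{blowup2} as you intended. If you want to salvage your outline, you would have to replace Steps~0--3 by this weighted/truncated energy structure, since no estimate of $\nabla c$ or of $u$ with only $\norm{n_0}_{L^{d/2}}$ small appears to be available.
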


This paper is organized as follows. In Section 2, we prove Theorem
\ref{Theorem3} by obtaining a priori estimates. Section 3 is devoted
to prove Theorem \ref{Theorem4} by adjusting De Giorgi method
introduced in \cite{PeVa}. We obtain the blow-up criteria in Theorem
3 by using a priori energy estimates in Section 4 and The proof of
Theorem 4 is presented again by using De Giorgi method in the last
section.
%

\section{Blow-up criteria of parabolic system}

We first recall following blow-up criteria for \eqref{KSNS} obtained
in \cite[Theorem 2]{ckl}:
\begin{equation}\label{2d-reg}
(2D)\qquad \int_0^{T^*}\norm{\nabla c}^2_{L^{\infty}(\R^2)}=\infty,
\end{equation}
\begin{equation}\label{3d-nse-reg}
(3D)\qquad
\int_0^{T^*}\norm{u}^{\gamma}_{L^{\beta}(\R^3)}+\int_0^{T^*}\norm{\nabla
c}^2_{L^{\infty}(\R^3)}=\infty, \qquad
\frac{3}{\beta}+\frac{2}{\gamma}=1,\,\,3<\beta\leq \infty,
\end{equation}
where $T^*$ is the maximal time of existence. If the fluid equation
is the Stokes system, not the Navier-Stokes equations, in case of
$3D$, then the condition on $v$ in \eqref{3d-nse-reg} is not
necessary and thus it can be dropped. From now on, we denote
$L^{q,p}_{t,x}=L^q(0, T^{*}; L^p(\R^d))$ and $L^p_{t,x}=L^p(0,
T^{*}; L^p(\R^d))$, unless any confusion is to be expected. All
generic constants will be written by $C$, which may change from one
line to the other and $\epsilon$ will be used to indicate some
sufficiently small positive number.
\\
\begin{pfthm1} We argue by contradiction.
We suppose that \eqref{CK10L-march13} in 2D and
\eqref{CK20L-march13} in 3D are finite. We then show that $T^*$
cannot be a finite maximal time of existence, which will lead to a
contradiction. We start with the case of dimension two.
\\
$\bullet$\,\,({\bf{2D case}})\quad We first suppose
$\|n\|_{L^2_{t,x}([0,T^{*})\times \bbr^2)}$ is finite. We will show
$\int_0^{T^{*}} \| \nabla c\|_{L^{\infty}}^2 dt<\infty$, which is
contrary to the blow-up criterion \eqref{2d-reg} proved in
\cite{ckl}. In the following, we obtain a priori estimates, since
our computations are made for any time $T$ with $T<T^*$.
We frequently use the following type of interpolation
inequality
\bq\label{inter}
\| D^l f \|_{L^p(\R^2)} \leq C\|
f\|_{L^q(\R^2)}^{\theta} \| D^k f \|_{L^r(\R^2)}^{1-\theta},
\eq
where $0\le l \leq k-1$ and
$l-\frac{2}{p}=-\theta\frac{2}{q}+(1-\theta)(k-\frac{2}{r})$ with
$0\le\theta\le 1$. From maximum principle for $c$ and conservation
of mass for $n$, it is immediate that $\|
c\|_{L^{\infty}_{t,x}}\le\|c_0 \|_{L^{\infty}}$ and $\| n
\|_{L^{\infty,1}_{t,x}} \leq \| n_0 \|_{L^1}$. We note that the
convection term, $(u\cdot\nabla) c$, is estimated as follows:
\[
\| (u\cdot \nabla) c\|_{L^2_{t,x}}^2
\leq C \int_0^T \| u \|_{L^2} \| \nabla u \|_{L^2} \|
c\|_{L^{\infty}} \| \Delta c \|_{L^2} dt
\]
\begin{equation}\label{C10KL-march26}
\leq \epsilon \| \Delta c \|_{L^2_{t,x}}^2+C_{\epsilon} \| u
\|_{L^{\infty, 2}_{t,x}}^2 \|c\|_{L^{\infty}_{t,x}}^2\| \nabla
u\|_{L^2_{t,x}}^2.
\end{equation}
Via $L^2$-estimate of the heat equation, we have
\begin{equation}\label{C20KL-march26}
\| c_t \|_{L^2_{t,x}}^2+\| \Delta c \|_{L^2_{t,x}}^2 \leq C\|
c_0\|_{H^1}^2+C \| n \|_{L^2_{t,x}}^2+ \| (u\cdot \nabla)
c\|_{L^2_{t,x}}^2.
\end{equation}
Combining \eqref{C10KL-march26} and \eqref{C20KL-march26} together
with the hypothesis $\| n \|_{L^2_{t,x}}^2<\infty$, we obtain $\|
\Delta c \|_{L^2_{t,x}} < \infty$.
%
%
%
On the other hands, $L^2$ scalar product for equation of $n$ gives
that
\[
\frac{d}{dt} \| n \|_{L^2}^2 +\| \nabla n \|_{L^2}^2 \leq C \| n \nabla c \|_{L^2} \| \nabla n \|_{L^2}
\leq \frac14 \| \nabla n \|_{L^2}^2 +C\| n \|_{L^4}^2 \| \nabla c\|_{L^4}^2
\]
\[
\leq \frac14 \| \nabla n \|_{L^2}^2 +C\| n \|_{L^2} \| \nabla n \|_{L^2} \| c\|_{L^{\infty}} \| \Delta c \|_{L^2}
\leq \frac12 \| \nabla n \|_{L^2}^2 +C\| c\|_{L^{\infty}}^2 \| \Delta c\|_{L^2}^2 \| n \|_{L^2}^2.
\]
Using Gronwall's inequality, we have
\[
 \| n \|_{L^{\infty,2}_{t,x}}^2 +\| \nabla n \|_{L^2_{t,x}}^2
 \leq \| n_0\|_{L^2}^2 \exp\left( C\|c\|_{L^{\infty}_{t,x}}^2\| \Delta c \|_{L^2_{t,x}} ^2  \right).
\]
Next, testing $-\Delta c$ with equation of $c$, we obtain
\[
\frac12\frac{d}{dt} \| \nabla c\|_{L^2}^2 +\| \Delta c \|_{L^2}^2
\leq \int_{\R^2} |\nabla u ||\nabla c|^2 dx+C \| n \|_{L^2}^2 +\frac14 \| \Delta c \|_{L^2}^2
\]
\[
\leq C \| \nabla u \|_{L^2} \| \nabla c\|_{L^4}^2+C \| n \|_{L^2}^2 +\frac14 \| \Delta c \|_{L^2}^2
\leq C \| \nabla u \|_{L^2}^2\| \nabla c\|_{L^2}^2+C \| n \|_{L^2}^2 +\frac12 \| \Delta c \|_{L^2}^2.
\]
Again, Gronwall's inequality impiles that
\[
\| \nabla c \|_{L^{\infty,2}_{t,x}}^2 +\| \Delta c \|_{L^2_{t,x}}^2 <\infty.
\]
The energy estimate of vorticity equation leads to
\[
\frac12\frac{d}{dt}\| \omega\|_{L^2}^2 +\| \nabla \omega\|_{L^2}^2
\leq C\| n\|_{L^2}^2 + \frac14 \| \nabla \omega \|_{L^2}^2.
\]
Therefore, we obtain $\| \omega\|_{L^{\infty,2}_{t,x}}^2+\| \nabla
\omega \|_{L^{2,2}_{t,x}}^2<\infty$ via Gronwall's inequality.
Finally, we have
\[
\frac12\frac{d}{dt} \| \Delta c\|_{L^2}^2+\| \nabla \Delta c\|_{L^2}^2 \leq C \| \nabla u \nabla c\|_{L^2}^2 +C\| u \nabla^2 c\|_{L^2}^2+ C\| \nabla c n \|_{L^2}^2 +C \| \nabla n \|_{L^2}^2
\]
\[
\leq C\| \nabla u \|_{L^3}^2 \| \nabla c \|_{L^6}^2+C\| u \|_{L^{\infty}}^2 \| \Delta c \|_{L^2}^2 +C\| \nabla c \|_{L^6}^2 \| n \|_{L^3}^2 +C\| \nabla n \|_{L^2}^2.
\]
This gives the bound of $\| \nabla \Delta c \|_{L^{2,2}_{t,x}}^2$ and in turn,
 the bound of $\| \nabla c \|_{L^{2, \infty}_{t,x}}$.
We complete the proof for the case $\| n \|_{L^2_{t,x}}<\infty$.\\
\indent In case that $\| n \|_{L^{p,q}_{x,t}}< \infty$ with
$\frac{2}{p}+\frac{2}{q}\leq 2$ and $p>2$,  we observe that $\| n
\|_{L^{2}_{x,t}}< \infty$. Indeed, due to interpolation and
H\"older's inequality,
\[
\int_0^T \| n \|_{L^2}^2 dt \leq C\int_0^T \| n \|_{L^1}^{\frac{p-2}{p-1}}
\| n \|_{L^p}^{\frac{p}{p-1}} dt \leq C\| n \|_{L^{p,q}_{x,t}}^{\frac{p}{p-1}}.
\]
Hence, with the aid of previous result of $L^2_{x,t}$ case, the case
$\frac{2}{p}+\frac{2}{q}\leq 2$ with $p>2$ is direct. It remains to
consider the case $\| n \|_{L^{p,q}_{x,t}}< \infty$ with
$\frac{2}{p}+\frac{2}{q}= 2$ and $1<p<2$. We note first that
\[
\frac12\frac{d}{dt} \| u \|_{L^2}^2+\| \nabla u \|_{L^2}^2 \leq C\| n \|_{L^p} \| u \|_{L^{\frac{p}{p-1}}}
\]
\[
\leq C\| n \|_{L^p} \| u \|_{L^2}^{\frac{2(p-1)}{p}} \| \nabla u \|_{L^2}^{\frac{2-p}{p}}
\leq C \| n \|_{L^p}^{\frac{2p}{3p-2}} \| u \|_{L^2}^{\frac{4(p-1)}{3p-2}} +\frac12 \| \nabla u \|_{L^2}^2.
\]
Therefore, due to Gronwall's inequality, $\sup \| u \|_{L^2}^2 +\int_0^T \| \nabla u \|_{L^2}^2 dt< \infty.$\\
Using the mixed norm estimate of the heat equation for $c$, we have
\begin{equation}\label{C30KL-march26}
\| c_t\|_{L^{p,q}}^q+\| \Delta c \|_{L^{p,q}}^q \leq C\| c_0
\|_{H^2}^q+ C\| u\cdot \nabla c \|_{L^{p,q}}^q +C \| n
\|_{L^{p,q}}^q.
\end{equation}
Noting that $\frac{(p-1)q}{p}= 1$, we compute

\[
\int_0^T \| u \cdot \nabla c \|_{L^p}^q dt \leq \int_0^T \| u \|_{L^{2p}}^q \| \nabla c\|_{L^{2p}}^q dt
\leq C\int_0^T \| u \|_{L^{2p}}^q \| c\|_{L^{\infty}}^{\frac{q}{2}} \| \Delta c\|_{L^p}^{\frac{q}{2}} dt
\]
\begin{equation}\label{C40KL-march26}
\leq C\int_0^T \| u \|_{L^{2p}}^{2q} dt+ \epsilon \int_0^T \| \Delta
c\|_{L^p}^q dt
\leq C \| u \|_{L^{2,\infty}_{x,t}}^{\frac{2q}{p}}\int_0^T \| \nabla
u \|_{L^2}^{\frac{2(p-1)q}{p}} dt +\epsilon \int_0^T \| \Delta
c\|_{L^p}^q dt.
\end{equation}
Combining \eqref{C30KL-march26} and \eqref{C40KL-march26} with
sufficiently small $\epsilon>0$, we have
\[
\| c_t\|_{L^{p,q}}^q+\| \Delta c \|_{L^{p,q}}^q \leq C.
\]
Multiplying the equation of $n$ with $\ln n$ and integrating it by
parts, we obtain
\[
\frac{d}{dt} \int n \ln n dx +\| \nabla \sqrt{n}\|_{L^2}^2
\leq C \| \nabla c\|_{L^{\frac{2p}{2-p}}} \| \nabla  \sqrt{n}\|_{L^2} \| \sqrt{n} \|_{L^{\frac{p}{p-1}}}
\]
\begin{equation}\label{C50KL-march26}
\leq C\| \Delta c\|_{L^p}\| \sqrt{n}\|_{L^2}^{\frac{2(p-1)}{p}} \|
\nabla \sqrt{n} \|_{L^2}^{\frac{2}{p}}
\leq C \| \Delta c\|_{L^{p}}^q\| \sqrt{n}\|_{L^2}^2 +\frac12 \| \nabla \sqrt{n}\|_{L^2}^2.
\end{equation}
Using Gronwall inequality, the estimate \eqref{C50KL-march26} leads
to $\nabla \sqrt{n}\in L^{2}_{t,x}$, which implies $\| n
\|_{L^{2}_{t,x}}<\infty$. This completes the proof for 2D case.\\
\\
$\bullet$\,\,({\bf{3D case}})\quad Suppose that
\eqref{CK20L-march13} is not true. As in 2D case, we then show
$\int_0^{T^*} \| \na c\|_{L^{\infty}(\R^3)}< \infty$, which is
contrary to the blow-up criterion \eqref{3d-nse-reg} proved in
\cite{ckl}.  The proof of the case for Stokes system is omitted,
since its verification is simpler.
%
We first show $\nabla c\in L^{\infty}(0,T^*; L^2)$ and $\nabla^2
c\in L^2(0,T^*; L^2)$. Testing $\log n$ to the equation
$\eqref{KSNS}_1$,
\[
\frac{d}{dt}\int n\log n +4\int\abs{\nabla n^{\frac{1}{2}}}^2 =\int
\chi(c)\nabla n\nabla c\leq C\int\abs{\nabla
n^{\frac{1}{2}}}n^{\frac{1}{2}}\abs{\nabla c}
\]
\begin{equation}\label{CKL10-march12}
\leq C\norm{\nabla
n^{\frac{1}{2}}}_{L^2}\norm{n^{\frac{1}{2}}}_{L^{2p}}\norm{\nabla
c}_{L^{\frac{2p}{p-1}}}\leq \epsilon\norm{\nabla
n^{\frac{1}{2}}}^2_{L^2}+C_{\epsilon}\norm{n}_{L^{p}}\norm{\nabla
c}^2_{L^{\frac{2p}{p-1}}}.
\end{equation}
Via Gagliardo-Nirenberg's inequality, we note
$\norm{\nabla c}_{L^{\frac{2p}{p-1}}}\leq C\norm{\nabla
c}^{\frac{2p-3}{2p}}_{L^2}\norm{\nabla^2 c}^{\frac{3}{2p}}_{L^2}$,
and therefore, combining \eqref{CKL10-march12}, we obtain
\begin{equation}\label{CKL30-march12}
\frac{d}{dt}\int n\log n +\int\abs{\nabla n^{\frac{1}{2}}}^2 \leq
\epsilon\norm{\nabla^2
c}^2_{L^2}+C_{\epsilon}\norm{n}^{\frac{2p}{2p-3}}_{L^{p}}\norm{\nabla
c}^2_{L^{2}}.
\end{equation}
Multiplying $\eqref{KSNS}_1$ with $-\Delta c$ and using integration
by parts,
\begin{equation}
\frac{1}{2}\frac{d}{dt}\int \abs{\nabla c}^2 +\int\abs{\Delta c}^2
=\int u\nabla c\Delta c+\int k(c)n\Delta c:=I+J.
\end{equation}
We first consider the term $J$. Following the same computations in
\eqref{CKL10-march12}-\eqref{CKL30-march12},
\begin{equation}\label{CKL40-march12}
J=-\int k'(c)n\abs{\nabla c}^2-\int k \nabla n \nabla c\leq\ep \|
n^{\frac 12} \|_{L^2}^2+ \epsilon\norm{\nabla^2
c}^2_{L^2}+C_{\epsilon}\norm{n}^{\frac{2p}{2p-3}}_{L^{p}}\norm{\nabla
c}^2_{L^{2}},
\end{equation}
where we used that $k'\ge 0$. On the other hand, $I$ is
estimated as follows:
\[
I\leq \norm{u}_{L^{\beta}}\norm{\nabla
c}_{L^{\frac{2\beta}{\beta-2}}}\norm{\Delta c}_{L^2}\leq
C\norm{u}_{L^{\beta}}\norm{\nabla
c}^{\frac{\beta-3}{\beta}}_{L^2}\norm{\nabla^2
c}^{\frac{\beta+3}{\beta}}_{L^2}
\]
\begin{equation}\label{CKL50-march12}
\leq \epsilon \norm{\nabla^2 c}^2_{L^2} +
C_{\epsilon}\norm{u}^{\frac{2\beta}{\beta-3}}_{L^{\beta}}\norm{\nabla
c}^2_{L^2}
\end{equation}
Summing up \eqref{CKL30-march12}, \eqref{CKL40-march12} and
\eqref{CKL50-march12},
\begin{equation}\label{CKL60-march12}
\frac{d}{dt}(\int n\log n +\frac{1}{2}\norm{\nabla c}^2_{L^2})+
\norm{\nabla n^{\frac{1}{2}}}^2_{L^2} +\norm{\nabla^2 c}^2_{L^2}
\leq C_{\epsilon}(\norm{n}^{\frac{2p}{2p-3}}_{L^{p}}
+\norm{u}^{\frac{2\beta}{\beta-3}}_{L^{\beta}})\norm{\nabla
c}^2_{L^2}.
\end{equation}
Due to Gronwall inequality, we observe that $\nabla c\in
L^{\infty}(0,T^*; L^2)$ and $\nabla^2 c\in L^2(0,T^*; L^2)$.

Next we consider the vorticity equation of fluid equations in
\eqref{KSNS}
\[
\partial_t \omega -\Delta \omega + u\cdot \nabla \omega-\omega \cdot \nabla u
=-\nabla\times (n \nabla \phi).
\]
Energy estimate shows
\begin{equation}\label{CKL65-march12}
\frac{1}{2}\frac{d}{dt}\norm{\omega}^2_{L^2}+\norm{\nabla\omega}^2_{L^2}=\int\omega
\cdot \nabla u\omega -\int\nabla\times (n \nabla
\phi)\omega:=K_1+K_2.
\end{equation}
First, we estimate $K_2$. Following computations as in above,
\[
K_2\leq C\int\abs{\nabla n}\abs{\omega}\leq C\norm{\nabla
n^{\frac{1}{2}}}_{L^2}\norm{n^{\frac{1}{2}}}_{L^{2p}}\norm{\omega}_{L^{\frac{2p}{p-1}}}
\leq C\norm{\nabla
n^{\frac{1}{2}}}_{L^2}\norm{n}^{\frac{1}{2}}_{L^{p}}\norm{\omega}^{\frac{2p-3}{2p}}_{L^2}
\norm{\nabla\omega}^{\frac{3}{2p}}_{L^2}
\]
\begin{equation}\label{CKL70-march12}
\leq \epsilon\norm{\nabla
n^{\frac{1}{2}}}^2_{L^2}+\epsilon\norm{\nabla\omega}^2_{L^2}
+C_{\epsilon}\norm{n}^{\frac{2p}{2p-3}}_{L^{p}}\norm{\omega}^2_{L^{2}},
\end{equation}
On the other hand, for $K_1$, similarly as in \eqref{CKL50-march12},
we show
\begin{equation}\label{CKL80-march12}
K_1\leq \epsilon \norm{\nabla\omega}^2_{L^2} +
C_{\epsilon}\norm{u}^{\frac{2\beta}{\beta-3}}_{L^{\beta}}\norm{\omega}^2_{L^2}.
\end{equation}
Adding above estimate together and using Gronwall inequality, we
observe that $\omega\in L^{\infty}(0,T^*; L^2)$ and $\nabla\omega
\in L^2(0,T^*; L^2)$.

Next, considering four cases of $\frac 32 < p<2$, $ 2\le p \le 3$, $
3< p \le 6$, and $p>6$ separately,  we will show that
\begin{equation}\label{udot}
\|  u\nabla c \| _{ L^q(0,T^*;L^p)}\le C + \ep \| \na ^2 c\| _{
L^q(0,T^*;L^p)}.
\end{equation}
The proof of \eqref{udot} will be given later. Then by the maximal
regularity of heat equation,
\begin{equation}\label{CKL10-april6}
 \| c_t\| _{ L^q(0,T^*;L^p)} + \|\Del c\| _{ L^q(0,T^*;L^p)} \le C
  + \ep \|\na^2 c\| _{ L^q(0,T^*;L^p)} + C \| n\| _{ L^q(0,T^*;L^p)},
\end{equation}
we obtain $ \| \na ^2 c\| _{ L^q(0,T^*;L^p)} <\infty$. Now we turn
to show that $n \in L^{\infty}(0, T^*; L^r)$ for any $r>1$. Testing
$n^{r-1}$ to $\eqref{KSNS}_1$ and noting $\chi'\ge0$, we observe
that
\[
\frac{1}{r}\frac{d}{dt}\norm{n}_{L^r}^r+\frac{4(r-1)}{r^2}\norm{\nabla
n^{\frac{r}{2}}}^2_{L^2}=-\frac{r-1}{r}\bke{\int\chi' \abs{\nabla
c}^2n^r+\int\chi \Delta c n^r}\leq C\int\abs{\Delta c} n^r
\]
\[
\leq C\norm{\Delta c}_{L^p}\norm{n^r}_{L^{\frac{p}{p-1}}}\leq C
\norm{\Delta
c}_{L^p}\norm{n^r}^{\frac{2p-3}{2p}}_{L^{1}}\norm{n^r}^{\frac{3}{2p}}_{L^{3}}
\]
\begin{equation}\label{CKL100-march12}
\leq C \norm{\Delta
c}_{L^p}\norm{n^r}^{\frac{2p-3}{2p}}_{L^{1}}\norm{\nabla
n^{\frac{r}{2}}}^{\frac{3}{l}}_{L^{2}} \leq C_{\epsilon}\norm{\Delta
c}^{\frac{2p}{2p-3}}_{L^p}\norm{n}^r_{L^r}+\epsilon\norm{\nabla
n^{\frac{r}{2}}}^2_{L^2}.
\end{equation}
Since $\nabla^2 c\in L^q(0,T^*;L^p)$, via Gronwall inequality, we
can prove that $n\in L^{\infty}(0,T^*;L^r)$ for any $r>1$. Let us
choose $r>3$ and via \eqref{CKL10-april6} we then obtain that $\|\na
c\|_{L_t^2 L_x^{\infty}} <\infty$. Indeed, we note that $ \| u\na
c\|_{L^r} < C_{\ep} + \ep \| \na^2 c\|_{L^r}$ for $3< r \le 6 $ (see
\eqref{3to6} below). Again by the  maximal regularity of heat
equation, we have
\[\| c_t\|_{L^2(0,T^*; L^r)} + \| \Del c\|_{L^2(0,T^*; L^r)}
\le C_{\ep} + \ep \| \na^2 c\|_{L^2(0,T^*; L^r)}.\]
Combined with $\| \na c\|_{L^{\infty}_tL^2_x} + \| \na^2 c\|_{L^2_{t,x}} <\infty$,
 the above yields to  $\| \na c\|_{L^2_tL^{\infty}_x} <\infty$ as
 desired,
which is contrary to a blow-up criterion \eqref{3d-nse-reg}.
\\
\indent It remains to show the estimate \eqref{udot}. By   $ (\na c,
\om) \in {L^{\infty}_tL^2_x}$ and $ (\na^2 c, \na \om)\in
{L^2_{t,x}}$, it follows that $u\nabla c$ belongs to
$L^4(0,T^*;L^2)\cap L^2(0,T^*;L^3)$ and so $u\nabla c\in
L^q(0,T^*;L^p)$ for $2\le p\le 3$. The other cases are treated as follows.\\
\noindent  (i) (Case $ \frac 32 < p <2$)\quad  Setting
$p^*=3p/(3-p)$, we have
\[
\norm{u\nabla c}_{L^p}\leq \norm{u}_{L^6}\norm{\nabla
c}_{L^{\frac{6p}{6-p}}}\leq C\norm{\omega}_{L^2}\norm{\nabla
c}^{\frac{p}{5p-6}}_{L^2}\norm{\nabla
c}^{\frac{4p-6}{5p-6}}_{L^{p^*}}\le C_\ep + \ep \| \na^2 c\|_{L^p},
\]
where we use that $2<6p/(6-p)<p^*$. Taking $L^q$-norm in time
variable,  it follows that $u\nabla c\in
L^q(0,T^*;L^p)$.\\
\noindent (ii)  (Case $3< p \le 6$)\quad Using that $\omega$ and
$\nabla c$ are in $L^{\infty}(0,T^*; L^2)$, we have
\[
\norm{u\nabla c}_{L^p}\leq
\norm{u}^{\frac{6-p}{2p}}_{L^2}\norm{u}^{\frac{3(p-2)}{2p}}_{L^6}\norm{\nabla
c}_{L^{\infty}}\leq
C\norm{u}^{\frac{6-p}{2p}}_{L^2}\norm{\omega}^{\frac{3(p-2)}{2p}}_{L^2}
\norm{\nabla c}^{\frac{2(p-3)}{5p-6}}_{L^2}\norm{\nabla^2
c}^{\frac{3p}{5p-6}}_{L^p}
\]
\begin{equation*}
\leq C\norm{u}^{\frac{6-p}{2p}}_{L^2}\norm{\nabla^2
c}^{\frac{3p}{5p-6}}_{L^p}\leq
C_{\epsilon}\norm{u}^{\frac{5p-6}{2(p-3)}}_{L^2}+\epsilon\norm{\nabla^2
c}_{L^p}.
\end{equation*}
On the other hands, due to $p/(p-1)\le q=2p/(2p-3)$, we note that
\begin{equation}\label{C20KL-march12}
\norm{n}^2_{L^2}\leq \norm{n}^{\frac{p-2}{p-1}}_{L^1}
\norm{n}^{\frac{p}{p-1}}_{L^p}\leq C+C\norm{n}^q_{L^p}.
\end{equation}
From $(1.1)_3$, we have
\begin{equation}\label{C60KL-march26}
\ddt \|u\|_{L^2}^2 +\|\na u\|_{L^2}^2 \le C\|u\|_{L^2}^2 +
\|n\|_{L^2}^2.
\end{equation}
Combining \eqref{C20KL-march12} and \eqref{C60KL-march26}, we obtain
$\|u\|_{L^{\infty}_t L^2_x} + \| \na u\|_{L^2_{t,x}} < \infty$.
Therefore we obtain
\begin{equation}\label{3to6}
\norm{u\nabla c}_{ L^p}\leq C_\ep+\epsilon\norm{\nabla^2 c}_{
L^p}\qquad \mbox{ for any }\,\,t<T^*.
\end{equation}
\noindent (iii) (Case $ p >6$)\quad We estimate
\[
\norm{u\nabla c}_{L^p}\leq\norm{u}_{L^p}\norm{\nabla
c}_{L^{\infty}}\leq
C\norm{u}^{\frac{p-2}{p}}_{L^{\infty}}\norm{u}^{\frac{2}{p}}_{L^2}\norm{\nabla
c}^{\frac{2(p-3)}{5p-6}}_{L^2}\norm{\nabla^2
c}^{\frac{3p}{5p-6}}_{L^p}\leq
C\norm{u}^{\frac{p-2}{p}}_{L^{\infty}}\norm{\nabla^2
c}^{\frac{3p}{5p-6}}_{L^p}
\]
\[
\leq C\norm{\nabla u}^{\frac {p-2}{2p}}_{L^{2}}\norm{\nabla^2
u}^{\frac {p-2}{2p}} \norm{\nabla^2 c}^{\frac{3p}{5p-6}}_{L^p} \leq
C\norm{\nabla \om}^{\frac{p-2}{2p}}_{L^{2}}\norm{\nabla^2
c}^{\frac{3p}{5p-6}}_{L^p}.
\]
Therefore, suing $q=2p/(2p-3)$ and Young's inequality, we have
\begin{equation}\label{C300KL-march13}
\norm{u\nabla c}^q_{L^p}
\leq C_{\epsilon}\norm{\nabla \omega}^2_{L^2}+\epsilon\norm{\nabla^2
c}^{\frac{12p^2}{(5p-6)(3p-4)}}_{L^p} \leq
C_{\epsilon}(1+\norm{\nabla \omega}^2_{L^2})+\epsilon\norm{\nabla^2
c}^{q}_{L^p},
\end{equation}
where we used that $\frac{12p^2}{(5p-6)(3p-4)}+ <q=\frac{2p}{2p-3}$
with $p>6$. Therefore, the estimate \eqref{udot} is also true for
$p
>6$.  This completes the proof.
\end{pfthm1}

Next we present the proof of existence of regular solutions in case
$\norm{n_0}_{L^1}$ is small in dimension two.
\begin{proposition}\label{l1small}
Let $d=2$ and initial data, $\chi$, $k$, and $\phi$ satisfy the
assumptions in Theorem \ref{Theorem3}. Assume further that $\| n_0
\ln n_0\|_{L^1(\R^2)} + \| \wx n_0\|_{L^1(\R^2)}$ is finite. Then,
there exists an $\ep>0$ such that if $\|n_0\|_{L^1} <\ep$ then the
maximal time of existence, $T^*$, is infinite, i.e. $T^*=\infty$.
\end{proposition}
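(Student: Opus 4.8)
The plan is to argue by contradiction. Assume $T^{*}<\infty$. Since $(p,q)=(2,2)$ satisfies $\frac2p+\frac2q=2$ with $p>1$, Theorem~\ref{Theorem3} shows it is enough to prove $n\in L^{2}((0,T^{*})\times\R^{2})$, and by the two--dimensional Gagliardo--Nirenberg inequality $\norm{n}_{L^{2}}^{2}\le C\norm{n}_{L^{1}}\norm{\na\sqrt n}_{L^{2}}^{2}$ it suffices in turn to bound $\int_{0}^{T^{*}}\norm{\na\sqrt n}_{L^{2}}^{2}\,dt$. Throughout I use mass conservation $\norm{n(t)}_{L^{1}}=\norm{n_{0}}_{L^{1}}=:M$, the maximum principle $\norm{c(t)}_{L^{\infty}}\le\norm{c_{0}}_{L^{\infty}}$, and the elementary bound $\norm{\na c}_{L^{4}}^{2}\le C\norm{c}_{L^{\infty}}\norm{\na^{2}c}_{L^{2}}$ coming from one integration by parts in $\int|\na c|^{4}$. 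The mechanism is that every chemotactic and consumption term will carry a factor $\sqrt M$, hence is absorbable once $\norm{n_{0}}_{L^{1}}$ is small.

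The heart of the argument is a free--energy estimate. Testing $\eqref{KSNS}_{1}$ by $\ln n$ (the drift $u\cdot\na n$ integrates to zero because $\na\cdot u=0$) gives $\ddt\int n\ln n+4\norm{\na\sqrt n}_{L^{2}}^{2}=\int\chi(c)\,\na c\cdot\na n$; writing $\na n=2\sqrt n\,\na\sqrt n$, Young's inequality together with the two interpolation inequalities above bounds the right side by $\tfrac12\norm{\na\sqrt n}_{L^{2}}^{2}+C\sqrt M\,(\norm{\na\sqrt n}_{L^{2}}^{2}+\norm{\na^{2}c}_{L^{2}}^{2})$ with $C=C(\norm{c_{0}}_{L^{\infty}},\norm{\chi}_{L^{\infty}})$. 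Testing $\eqref{KSNS}_{2}$ by $-\Del c$ gives $\tfrac12\ddt\norm{\na c}_{L^{2}}^{2}+\norm{\Del c}_{L^{2}}^{2}=\int(u\cdot\na c)\Del c+\int k(c)n\Del c$; here $\int k(c)n\Del c=-\int k'(c)n|\na c|^{2}-\int k(c)\,\na c\cdot\na n\le-\int k(c)\,\na c\cdot\na n$ since $k'\ge0$, which is handled exactly as the chemotactic term, while $\int(u\cdot\na c)\Del c=-\int(\na u\cdot\na c)\cdot\na c\le\ep\norm{\na^{2}c}_{L^{2}}^{2}+C_{\ep}\norm{c_{0}}_{L^{\infty}}^{2}\norm{\na u}_{L^{2}}^{2}$. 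Adding the two identities and taking $\ep$ small and then $M$ small enough that $C\sqrt M$ is small, one gets
\[
\ddt\mathcal F+\norm{\na\sqrt n}_{L^{2}}^{2}+\tfrac12\norm{\Del c}_{L^{2}}^{2}\le C\norm{\na u}_{L^{2}}^{2},\qquad \mathcal F(t):=\int n\ln n+\tfrac12\norm{\na c}_{L^{2}}^{2}.
\]

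To close the loop I add the fluid energy identity, obtained by testing $\eqref{KSNS}_{3}$ by $u$ (the term $u\cdot\na u$ drops out): $\ddt\norm{u}_{L^{2}}^{2}+\norm{\na u}_{L^{2}}^{2}\le C\norm{\na\phi}_{L^{\infty}}^{2}\norm{u}_{L^{2}}^{2}+\norm{n}_{L^{2}}^{2}\le C\norm{u}_{L^{2}}^{2}+CM\norm{\na\sqrt n}_{L^{2}}^{2}$, using Gagliardo--Nirenberg again. Since $\int n\ln n$ is not bounded below on its own, I also test $\eqref{KSNS}_{1}$ by $\wx$; using $|\na\wx|\le1$, $|\Del\wx|\le C$ and Cauchy--Schwarz, $\ddt\int\wx\,n\le CM+\norm{u}_{L^{2}}\norm{n}_{L^{2}}+\norm{\chi}_{L^{\infty}}M^{1/2}(\int n|\na c|^{2})^{1/2}$, and the right side is $\le\del(\norm{\na\sqrt n}_{L^{2}}^{2}+\norm{\Del c}_{L^{2}}^{2})+C_{\del}(1+M\norm{u}_{L^{2}}^{2})$ for any $\del>0$. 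Forming $\Phi:=a\mathcal F+b\norm{u}_{L^{2}}^{2}+c\int\wx\,n$ with fixed weights $a,b,c>0$ ($b$ large enough to absorb the $\norm{\na u}_{L^{2}}^{2}$ on the right of the $\mathcal F$--inequality, $c>a$, and $\del$ small) and using the Gibbs/Jensen bound $\int n\ln n\ge-C(M)-\int\wx\,n$ (against the normalized weight $\propto e^{-\wx}$), $\Phi$ is bounded below and, for $M<\ep$ small enough, satisfies $\ddt\Phi+\nu(\norm{\na\sqrt n}_{L^{2}}^{2}+\norm{\Del c}_{L^{2}}^{2}+\norm{\na u}_{L^{2}}^{2})\le C(1+\Phi)$ for some $\nu>0$. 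The hypothesis $\norm{n_{0}\ln n_{0}}_{L^{1}}+\norm{\wx\,n_{0}}_{L^{1}}<\infty$ gives $\Phi(0)<\infty$, so Grönwall's inequality on the \emph{finite} interval $[0,T^{*}]$ yields $\Phi\in L^{\infty}(0,T^{*})$ and $\int_{0}^{T^{*}}\norm{\na\sqrt n}_{L^{2}}^{2}\,dt<\infty$; hence $\norm{n}_{L^{2}((0,T^{*})\times\R^{2})}^{2}\le CM\int_{0}^{T^{*}}\norm{\na\sqrt n}_{L^{2}}^{2}\,dt<\infty$, contradicting Theorem~\ref{Theorem3}, and therefore $T^{*}=\infty$.

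The delicate point is this last step: the three estimates are genuinely coupled — $\norm{n}_{L^{2}}^{2}$ feeds into both the fluid energy and the growth of $\int\wx\,n$; $\norm{\na u}_{L^{2}}^{2}$ feeds back into the free energy through $(u\cdot\na c)\Del c$; and the weighted mass is carried along only to keep $\int n\ln n$, hence $\mathcal F$, bounded below — so one has to verify that the single smallness condition on $\norm{n_{0}}_{L^{1}}$, which enters every dangerous term through the factor $M$ from $\norm{n}_{L^{2}}^{2}\le CM\norm{\na\sqrt n}_{L^{2}}^{2}$, really does absorb all of them into the dissipation with Grönwall constants that do not degenerate as $t\uparrow T^{*}$.
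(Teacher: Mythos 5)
Your proposal is correct and follows essentially the same route as the paper: an entropy/free-energy estimate (testing with $\ln n$ and $-\Delta c$) combined with the fluid energy and the weighted mass $\int \wx\, n$ to keep $\int n\ln n$ bounded below, with the smallness of $\|n_0\|_{L^1}$ absorbing all dangerous terms into the dissipation, yielding $\nabla\sqrt n\in L^2_{t,x}$, hence $n\in L^2_{t,x}$ and regularity via Theorem \ref{Theorem3}. The only (cosmetic) differences are that you dispense with the vorticity equation by absorbing $\|\nabla u\|_{L^2}^2$ directly into the fluid dissipation, and you bound the negative part of the entropy by Jensen's inequality rather than the paper's explicit set decomposition.
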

\begin{proof}
Estimates in this proof are a priori, since all computations are
made before the maximal time of existence, $T^*$. We note that, due
to the conservation of mass, $ \displaystyle \sup_{0\le t < T^*} \|
n\|_{L^1} < \ep$ and therefore, we have
\begin{equation}\label{l2small}
\|n\|_{L^2}\le C\| \sqrt n\|_{L^2} \| \na \sqrt n\|_{L^2} \le C\ep
\| \na \sqrt  n\|_{L^2}.
\end{equation}
Multiplying equations of $n, c$ in \eqref{KSNS} with $\ln n$,
$\Delta c$, respectively, we obtain
\begin{equation}\label{lnn}
\ddt\int n \ln n dx+ \int |\na \sqrt n|^2 dx   = - \int \chi(c)
\Delta c n dx \le C \ep \| \Delta c\|_{L^2} \| \na  \sqrt n\|_{L^2}.
\end{equation}
\[
\frac 12 \ddt \| \na c\|_{L^2}^2 +\| \Delta c\|_{L^2}^2 \le C \ep \|
\nabla\sqrt n\|_{L^2}\| \Delta c\|_{L^2} + C \| \na u\|_{L^4} \| \na
c\|_{L^4} \| \na c\|_{L^2}
\]
\begin{equation}\label{delc}
\le C\ep \| \nabla\sqrt n\|_{L^2}^2+ \frac 12\| \Delta c\|_{L^2}^2 +
C \| \na u\|_{L^2}\| \na c \|_{L^2}^3 + \frac 14 \| \nabla
\omega\|_{L^2}^2.
\end{equation}
Adding  \eqref{lnn} and \eqref{delc} with the following estimate:
\[
\frac 12 \ddt\| \omega\|_{L^2}^2 + \| \na \omega\|_{L^2}^2
\le C \| n\|_{L^2}^2 + \frac 14 \| \na \omega\|_{L^2}^2,
\]
we have
\[
\ddt \bke{  \|\na c\|_{L^2}^2 + \| \omega\|_{L^2}^2 + \int n \ln n
dx } + \| \na \sqrt n\|_{L^2}^2 + \|\Del c\|_{L^2}^2 +\| \na \omega
\|_{L^2}^2
\]
\[
\le C \| \na c\|_{L^2}^2 ( \| \na u \|_{L^2}^2 + \|\na c\|_{L^2}^2).
\]
By Gronwall's inequality, we obtain
\begin{align} \label{indefinite}\begin{aligned}
& \bke{  \|\na c\|_{L^2}^2 + \| \omega\|_{L^2}^2 + \int n \ln n dx }
+ \int_0^t \bke{\| \na \sqrt n\|_{L^2}^2 + \|\Del c\|_{L^2}^2 +\| \na \omega \|_{L^2}^2 } \, ds\\
&\quad \le  \bke{ \|\na c_0\|_{L^2}^2 + \| \omega_0\|_{L^2}^2 + \int
_0 n_0 \ln n_0 dx} \exp \left ( C \| c_0 \|^2_{L^2} \right).
\end{aligned}\end{align}
where we used that $\norm{ \na c }_{L^2_{x,t}}\leq
\norm{c_0}_{L^2}$. Next, we estimate $\int n \abs{\ln n} dx$. For
simplicity, we set
\[
D_1=\{x: n(x)\leq e^{-\abs{x}}\},\qquad D_2=\{x:
e^{-\abs{x}}<n(x)\leq 1\}.
\]
A typical argument for dealing with kinetic entropy (see e.g.
\cite{DiLi}), we estimate
\[
\left|\int n(\ln n)_ - \right|=-\int_{D_1} n\ln n-\int_{D_2} n\ln
n\leq C\int_{D_1}\sqrt{n}+\int_{D_2} \wx n \leq C\int
e^{-\frac{\abs{x}}{2}}+\int \wx n,
\]
where $(\ln x)_-$ is a negative part of $\ln x$ and
$\wx=(1+\abs{x}^2)^{\frac{1}{2}}$. We compute
\begin{equation}\label{xn}
\ddt\int_{\R^2} \wx n dx = \int_{\R^2} nu \na \wx dx + \int_{\R^2} n
\Del \wx dx + \int_{\R^2} \chi(c) n \na c \na \wx dx.
\end{equation}
The term   $\int_{\R^2} nu \na \wx dx $ is estimated by
\[
\abs{\int_{\R^2} nu \na \wx dx }\le  \ep \|\na \sqrt
n\|_{L^2}\|u\|_{L^2}.
\]
Noting that $\abs{\na \wx}+\abs{\Del \wx}\leq C$, we get
\[
\abs{\int_{\R^2} n \Del \wx dx} + \abs{\int_{\R^2} \chi(c) n \na c
\na \wx dx}
\leq C\ep(1 +\norm{\nabla \sqrt{n}}_{L^2}\norm{\nabla c}_{L^2}).
\]
Thus, \eqref{xn} is estimated as follows:
\begin{align}\label{equ33}
\ddt \int_{\R^2} \wx n dx \le C\ep( \|\na \sqrt n \|_{L^2}^2 
+ \|u\|_{L^2}^2 +  \norm{\nabla c}^2_{L^2} +1).
\end{align}
From the $u$-equation we have
\[\ddt \| u\|_{L^2}^2 +\|\na u\|_{L^2}^2 \le C\ep \| \na \sqrt n\|_{L^2} \|u\|_{L^2},\]
which gives
\begin{equation}\label{equ34} \| u\|_{L^2}^2 + \int_0^t \|\na u\|_{L^2}^2 ds \le \|u_0\|_{L^2} +
 C\ep \int_0^t \| \na \sqrt n\|^2 + \| u\|_{L^2}^2 ds.
 \end{equation}
We add $ 2\abs{ \int n(\ln n)_- dx }$ to  \eqref{indefinite} and
using \eqref{equ33}, \eqref{equ34} we then have
\[
\bke{  \|\na c\|_{L^2}^2 + \| \omega\|_{L^2}^2 + \int n |\ln n |dx +
\| u\|_{L^2}^2 }
\]
\[
+\int_0^t \bke{\frac 12\| \na \sqrt n\|_{L^2}^2 + \|\Del c\|_{L^2}^2
+\| \na \omega \|_{L^2}^2+ \|\na u\|_{L^2}^2} \, ds
\]
\[
\le  \bke{ \|\na c_0\|_{L^2}^2 + \| \omega_0\|_{L^2}^2 + \int _0 n_0
\ln n_0 dx} \exp \left ( C \| c_0 \|^2_{L^2} \right)
\]
\[
+ C + \int \wx n_0 dx + C\ep \| c_0\|_{L^2}^2 +  C\epsilon \| u_0
\|_{L^2}^2 t.
\]
Therefore, we have $\int_0^T \|\na \sqrt n \|^2_{L^2} dt \le C(T)$,
which implies $n\in L^{2}_{x,t}$ via \eqref{l2small}. Therefore, it
is direct, due to \eqref{CK10L-march13} in Theorem \ref{Theorem3},
that solutions become regular. This completes the proof.
\end{proof}


\begin{section}{Proof of Theorem \ref{Theorem4}}
In this section we present the proof of Theorem \ref{Theorem4}. We
start with the control of $L^p-$norm of $n$ under the smallness of
$\norm{c_0}_{L^\infty}$ in next proposition. We use the similar
weighted energy estimate in \cite{YTao}, which treated the case of
$\chi, k$ are constants and fluid equation is absent. We remark that
due to the incompressible condition of $u$, the proof of \cite[Lemma
3.1]{YTao} can be applicable to our case and the generalization to
non-constant $\chi(c),\kappa(c)$ is also available as long as a
maximum principle of $c$ holds, i.e. $0\le c\le
\|c_0\|_{L^{\infty}}$.


\begin{proposition}\label{alemma}
Let the assumptions in Theorem \ref{Theorem3} hold and $ p \in (1,
\infty)$. There exists $\delta_1=\delta_1(p)$ such that if
$\norm{c_0}_{L^{\infty}}<\delta_1$, then $n(t)\in L^p(\R^d)$ for all
$t\in [0,T^*)$ and
\begin{equation}\label{CKL10-march27}
\| n(t)\|_{L^p} \le C=C( p, \|c_0\|_{L^{\infty}}, \|n_0\|_{L^p}),
\end{equation}
where $T^*$ is the maximal time of existence in Theorem
\ref{Theorem3}.
%
\end{proposition}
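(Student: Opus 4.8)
The plan is to run a weighted $L^p$ energy estimate for $n$ in the spirit of \cite[Lemma 3.1]{YTao}, the weight being a function of $c$ that exploits the maximum principle $0\le c(t)\le\norm{c_0}_{L^\infty}=:M$. All bounds below are a priori, for the classical solution on $[0,T]$ with $T<T^*$. Fix a smooth positive weight $\varphi$ on $[0,M]$ (to be chosen), multiply $\eqref{KSNS}_1$ by $p\,n^{p-1}\varphi(c)$, integrate over $\R^d$, and use $\eqref{KSNS}_2$ to rewrite the resulting $\pa_t c$ contribution. The two fluid-convection terms that appear, $\int n^p\varphi'(c)(\na c)\cdot u$ and $-\int n^p\varphi'(c)u\cdot\na c$, cancel because $\na\cdot u=0$; thus the fluid equation enters only through incompressibility, and the argument is insensitive to whether $\eqref{KSNS}_3$ is the Stokes or the Navier--Stokes system, and to the dimension. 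Integrating by parts the diffusion of $n$, the chemotactic flux and the $\Del c$ term gives
\[
\ddt\int n^p\varphi(c)+p(p-1)\int n^{p-2}\varphi(c)\abs{\na n}^2
=\int n^{p-1}a(c)\,\na n\cdot\na c+\int n^p b(c)\abs{\na c}^2-\int n^{p+1}\varphi'(c)k(c),
\]
with $a(c):=p(p-1)\varphi(c)\chi(c)-2p\varphi'(c)$ and $b(c):=p\varphi'(c)\chi(c)-\varphi''(c)$.

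Next I would absorb the indefinite cross term into the dissipation via Young's inequality, $\abs{\int n^{p-1}a(c)\na n\cdot\na c}\le\frac{p(p-1)}{2}\int n^{p-2}\varphi(c)\abs{\na n}^2+\frac1{2p(p-1)}\int n^p\frac{a(c)^2}{\varphi(c)}\abs{\na c}^2$, and discard the term $-\int n^{p+1}\varphi'(c)k(c)$, which is $\le0$ as soon as $\varphi'\ge0$ (recall $k\ge0$). This leaves
\[
\ddt\int n^p\varphi(c)+\frac{p(p-1)}{2}\int n^{p-2}\varphi(c)\abs{\na n}^2
\le\int n^p\bkt{\frac{a(c)^2}{2p(p-1)\varphi(c)}+b(c)}\abs{\na c}^2 .
\]
The remaining task is algebraic: choose $\varphi$ so that the bracket is $\le0$ on $[0,M]$ while $\varphi'\ge0$ there. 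Writing $\psi:=(\ln\varphi)'=\varphi'/\varphi$, so $\varphi''=(\psi'+\psi^2)\varphi$, a short computation reduces the sign condition on the bracket (after dividing out $\varphi>0$) to the scalar inequality
\[
\psi'(c)\ \ge\ \tfrac{p(p-1)}{2}\,\chi(c)^2-p\,\chi(c)\,\psi(c)+\tfrac{p+1}{p-1}\,\psi(c)^2\qquad\text{on }[0,M].
\]

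I would then close this with the linear ansatz $\psi(c)=Kc$, giving $\varphi(c)=e^{Kc^2/2}$, with $\psi\ge0$ and $\varphi'\ge0$ on $[0,M]$ (using $c\ge0$). Since $\chi\ge0$ the middle term is $\le0$, so it suffices that $K\ge\tfrac{p(p-1)}{2}\chi_M^2+\tfrac{p+1}{p-1}K^2M^2$ with $\chi_M:=\max_{[0,M]}\chi$; taking $K:=p(p-1)\chi_M^2$ this reduces to $M^2\le\bigl(2p(p+1)\chi_M^2\bigr)^{-1}$, valid once $\norm{c_0}_{L^\infty}=M<\delta_1$ for a suitable $\delta_1=\delta_1(p,\chi)$ (e.g.\ $\delta_1\le1$ and $\delta_1\le\bigl(2p(p+1)\bigr)^{-1/2}(\max_{[0,1]}\chi)^{-1}$; if $\chi\equiv0$ on $[0,1]$ the chemotactic term is absent and the claim is trivial). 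For such a $\varphi$ one has $1\le\varphi\le e^{KM^2/2}\le e^{1/4}$ on $[0,M]$, the last displayed inequality collapses to $\ddt\int n^p\varphi(c)\le0$, hence $t\mapsto\int n^p(t)\,\varphi(c(t))$ is nonincreasing and
\[
\norm{n(t)}_{L^p}^p\le\int n^p(t)\,\varphi(c(t))\le\int n_0^p\,\varphi(c_0)\le e^{1/4}\norm{n_0}_{L^p}^p\qquad\text{for all }t\in[0,T^*),
\]
which gives \eqref{CKL10-march27}; note that no Gronwall argument is needed. The genuine obstacle is finding the weight: the indefinite coefficient cannot be removed by $\varphi$ directly, but the combination surviving the Young step can be made $\le0$, which leaves the scalar inequality above; since the quadratic in $\psi$ on its right-hand side has negative discriminant, no \emph{constant} $\psi$ works, forcing $\psi'>0$, i.e.\ $\psi$ linear, and then the smallness of $\norm{c_0}_{L^\infty}$ does the rest. (Only $\chi\ge0$ and $k\ge0$ are used in this step.)
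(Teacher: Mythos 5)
Your proposal is correct and follows essentially the same route as the paper: a weighted $L^p$ estimate in the spirit of \cite{YTao} with the fluid terms cancelling by incompressibility, the term involving $k$ discarded by the sign condition $\varphi'\ge 0$, and a Gaussian-in-$c$ weight (your $\varphi=e^{Kc^2/2}$ is the paper's $\phi=e^{(\be c)^2}$ with $K=2\be^2$) whose admissibility is secured by the smallness of $\norm{c_0}_{L^{\infty}}$. Your reformulation via $\psi=(\ln\varphi)'$ and the resulting scalar Riccati-type inequality is a slightly cleaner packaging of the paper's pointwise condition \eqref{CKL20-march27}, but the substance of the argument is the same.
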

\begin{proof}
For a positive $\phi(c)$ such that $\phi'(c) \ge 0$, which will be
determined later, we obtain
\[
\frac 1 p\ddt \int n^p\phi=
 \int n^{p-1}\phi \left( -u\cdot \na n + \Del n - \na\cdot(n\chi \na c)\right)
 + \frac 1 p \int n^p\phi'\left( -u\cdot \na c + \Del c - n k(c) \right)
\]
\[
= \int n^{p-1} \phi\Del n - \int n^{p-1} \phi \chi \na\cdot (n \na
c) +\frac 1p \int n^p \phi' \Del c
\]
\begin{equation}\label{basic1}
- \int n^{p-1} \phi \chi' n |\na c|^2  - \frac 1 p \int n^p \phi' n
k(c) - \int n^{p-1}\phi u\cdot \na n - \frac 1p \int n^p \phi' u
\cdot \na c.
\end{equation}
We note that, due to $ \na\cdot u =0$ such that $\phi' u \cdot \na c
=\na \cdot (\phi u)$, the last two terms in \eqref{basic1} are
cancelled, i.e. $\int n^{p-1}\phi u\cdot \na n +\frac 1p \int n^p
\phi' u \cdot \na c =0$.
%
%
%
Via the integration by parts, we have
\[
\frac 1 p\ddt \int n^p\phi  +  (p-1) \int n^{p-2} \phi |\na n|^2 +
\frac 1p \int n^p \phi^{''} |\na c|^2
\]
\begin{equation}\label{CKL15-march27}
= -2 \int n^{p-1}\phi' \na n \cdot \na c+ (p-1)\int n^{p-1}\chi \phi
\na n \cdot \na c + \int n^p \chi \phi' |\na c|^2 - \frac{1}{p}\int
n^p \phi' n k.
\end{equation}
Noting that the last term in \eqref{CKL15-march27} is non-positive
and using Cauchy-Schwartz inequality,
%
\begin{align}\label{basic2}
&\frac 1 p\ddt \int n^p\phi  +  \frac{p-1}{2} \int n^{p-2} \phi |\na n|^2 + \frac 1p \int n^p \phi^{''} |\na c|^2\\
\nonumber
& \le \frac{4}{p-1} \int n^{p}\frac{(\phi')^2}{\phi} | \na c|^2+ (p-1)\int n^p\chi ^2\phi |\na c|^2
 + \int n^p\chi \phi' |\na c|^2.
\end{align}
We set $\phi(c) = e^{(\be c)^2}$ and we look for $\phi$ satisfying
\begin{equation}\label{CKL20-march27}
\frac{4}{p-1} \frac{(\phi')^2}{\phi}
+(p-1)\chi ^2\phi
+\chi \phi' \le  \frac {1}{2p}\phi^{''}.
\end{equation}
%
Let $\displaystyle\chi_1 = \sup_{0\le c\le \|c_0\|_{L^{\infty}}}
\chi(c).$ We then see that \eqref{CKL20-march27} is satisfied,
provided that
\begin{align} \label{1-3}
(p-1)\chi_1^2 \le \frac{1}{3p} \be^2, \qquad
\|c_0\|_{L^{\infty}}\chi_1 \le \frac{1}{6p}, \qquad
\frac{8}{p-1}\be^2 \|c_0\|^2_{L^{\infty}} \le \frac{1}{6p}.
\end{align}
If $\be$ is chosen such that $ 6p(p-1)\chi_1^2 = \be^2$ and if
$\chi_1 \|c_0\|_{L^{\infty}} \le \frac{1}{24p}$, it is
straightforward that \eqref{1-3} is satisfied.
%
Therefore, if $\|c_0\|_{L^{\infty}}$ is sufficiently small, we
obtain
\[
\frac 1 p \ddt \int n^p \phi + \frac{p-1}{2} \int n^{p-2}\phi |\na n|^2
+ \frac{1}{2p} \int n^p \phi'' |\na c|^2 \le 0.
\]
Since $\phi >1$, it follows that $ \int_{\R^d} n^p(t)dx \le e^{\be^2
\| c_0\|_{L^{\infty}}^2} \int_{\R^d} n_0^p dx. $ This completes the
proof.
\end{proof}

We remark that Proposition \ref{alemma} dose not give control of
$\|n\|_{L^{\infty}}$. Toward the boundedness as well as the decay of
$\|n\|_{L^{\infty}}$, we modify the approach done in \cite{PeVa},
where the degenerate Keler-Segel system \eqref{degKS} was
considered. With the aid of incompressibility of the velocity vector
field $u$, it turns out that the method of proof in  \cite{PeVa} can
be adjusted properly to our case. Now we are ready to present the
proof of Theorem \ref{Theorem4}.
\\
\begin{pfthm2}
We first note that solutions are classical, because of Theorem
\ref{Theorem3} and Proposition \ref{alemma}. To obtain a truncated
energy inequality for (P-KSNS), we first differentiate $\int \nt^p
\phi $ in time variable, where $\phi$ is the function introduced in
the proof of Proposition \ref{alemma}. Similarly as in
\eqref{basic1}, \eqref{basic2}, we have
\begin{align*}
&\frac 1 p\ddt \int \nt^p\phi  +  (p-1) \int \nt^{p-2} \phi
|\na\nt|^2
+ \frac 1p \int \nt^p \phi^{''} |\na c|^2\\
& =-2 \int \nt^{p-1}\phi' \na \nt \cdot \na c - \underbrace{\frac{1}{p}\int \nt^p \phi' n \kappa}_{ \ge 0}\\
& + (p-1)\int \nt^{p-1}\chi \phi \na \nt \cdot \na c
+ \int \nt^p \chi \phi' |\na c|^2 \\
& +  K(p-1) \int \nt^{p-2} \chi \phi \na\nt \cdot \na c +
K\int \nt ^{p-1} \chi \phi' |\na c|^2 .
\end{align*}
We note that the last two integrands in the equality above are
bounded as follows:
\[
\nt^{p-2} \phi \chi \na \nt \cdot \na c \le
\frac{1}{4K} \nt^{p-2} \phi |\na \nt|^2 + 4K\nt^{p-2} \phi \chi^2 |\na c|^2,
\]
\[
\nt ^{p-1}\chi\phi' |\na c|^2 \le \bke{\frac{1}{8K}\nt^p  + 8K}
\chi\phi' |\na c|^2.
\]
In what follows, we fix $p=2$.
%
%
%
With the aid of \eqref{1-3}, we have
\begin{align}\label{CKL10-march28}
 & \frac 1 2 \ddt \int \nt^2 \phi + \frac 14 \int \phi |\na \nt|^2
+ \frac{1}{8} \int \nt^2  \phi'' |\na c|^2 \nonumber\\
& \le 4K^2 \int \phi\chi^2 |\na c|^2 + 8K^2\int \chi \phi' |\na c|^2
\leq 8K^2\int (\phi\chi^2+\chi \phi') |\na c|^2.
\end{align}
We set $\displaystyle L:=\sup _{ 0\le c \le \| c_0\|_{L^{\infty}}}
\left( \phi\chi^2 + \chi \phi' \right)$. Multiplying the equation
$c$ with $16K^2L$, we have
\begin{align}\label{trivialc}
\ddt \int 8K^2L \ct^2 + \int 16K^2L| \na c |^2 \le 0.
\end{align}
Summing up \eqref{CKL10-march28} and \eqref{trivialc}, we have
\begin{align}\label{ee1}\begin{aligned}
&\ddt \left( \int \frac 1 2 \nt^2\phi  + \int 8K^2L \ct^2 \right)
+ \frac{1}{4} \int  \phi |\na \nt|^2 \\
& + \frac 18 \int \nt^2 \phi^{''} |\na c|^2
+ 8K^2L \int | \na c |^2 \le  0.
\end{aligned} \end{align}
Similarly proceeding as in \cite{PeVa}, we define
\[
U(\xi) = \int_0^T \nu(t) \int  \ntt^2 + \int_0^T \nu(t) \int \ctt^2
:= U_1(\xi)+ U_2(\xi),\quad \xi>0.
\]
Here the auxiliary functions  $\nu(t), \eta(t)$, and the range of
$\xi \in [0, 2M]$ are specified later, where $\nu(t)$ and $\eta(t)$
are decreasing in $t$ and $M$ is a fixed number. We then note that
\begin{align}\label{ufrime}
U'(\xi) = -2\int_0^T \nu(t) \eta(t) \int  \ntt -2 \int_0^T \nu(t)
\eta(t) \int \ctt .
\end{align}
Let $\displaystyle K_1:=\sup_{0\le \xi \le 2M, t>0} \xi \eta(t)$.
Repeating similar computations as in \eqref{ee1}, we observe that
\begin{align}\label{ee2}
\begin{aligned}
&\ddt \left( \int \frac 1 2 \ntt^2\phi  + 8K_1^2L\int \ctt^2 \right)
+ \frac{1}{4} \int  \phi |\na \ntt|^2 \\
& + \frac 18 \int \ntt^2 \phi^{''} |\na c|^2
+ 8K_1^2L \int | \na \ctt |^2 \\
& \le - \xi \eta'(t) \int \ntt\phi dx - \xi\eta'(t) \int \ctt.
\end{aligned} \end{align}
We set $\phi_{\rm{min}}=\min\phi(c)\geq 1$. For simplicity, we
define
\begin{align*}
E(\xi):=& \sup_{0\le t\le T} \left( \int \frac 1 2 \ntt^2
 + \frac{8K_1^2L}{\phi_{\rm{min}}}\int \frac 12 \ctt^2   \right)+
  \frac{1}{4} \int_0^T\int  |\na \ntt|^2\\
 &+ \frac 18 \frac{1}{\phi_{\rm{min}}}\int_0^T \int \ntt^2 \phi^{''} |\na c|^2
 + \frac{8K_1^2L}{\phi_{\rm{min}}}\int_0^T \int | \na \ctt |^2.
 \end{align*}
Then, after integrating \eqref{ee2} in time variable for  $ \xi \ge
\xi_0:=\eta^{-1}(0)\max\{ \| n_0\|_{L^{\infty}}, \|
c_0\|_{L^{\infty}}\}$, we obtain
 \begin{align}\label{energy}
 E(\xi) &\le - \frac{1}{\phi_{\rm{min}}}
 \left( \int_0^T\xi \eta'(t) \int \ntt \phi dx + \int _0^T\xi\eta'(t) \int \ctt \right).
\end{align}
Assuming that $|\eta'(t)| \le C \nu(t) \eta(t)$, which will be
confirmed later, we have
\begin{equation}\label{EU}
E(\xi) \le C\xi |U'(\xi)|.
\end{equation}
We use the Sobolev embedding and then by  interpolating
 we have
 \begin{align}\label{2q}
 \| \ntt \|_{L_{t,x}^q}^2 + \| \ctt\|_{ L_{t,x}^q}^2  \le C E(\xi), \qquad q = 2 (d+2)/d, \quad d\ge 2.
 \end{align}
For simplicity, we denote
\[
A:=\int_0^T \int \nu(t) ^{\frac {1}{\al} } \ntt,\qquad B:=\int_0^T
\int \nu(t) ^{\frac {1}{\al} } \ctt.
\]
Interpolating  $1<2<q$ in space and using the H\"{o}lder's
inequality in time, we have
\begin{equation}\label{CKL20-march28}
U_1(\xi) \le A^{\al}E^{\theta}(\xi),\qquad U_2(\xi)  \le
B^{\al}E^{\theta}(\xi),\qquad \al = \frac {4}{4+d}, \,\, \theta=
\frac{d+2}{d+4}.
\end{equation}
%
%
Under $\nu(t)\eta(t)\ge C \nu(t)^{\frac{1}{\al}}= C\nu(t)^{1+
\frac{d}{4}}$,
we obtain from \eqref{ufrime} together with \eqref{EU} and
\eqref{CKL20-march28}
\begin{equation}\label{CKL30-march28}
| U'(\xi)| \ge CA+CB\ge C U^{\frac {1}{\al}} E^{- \theta/ \al}\ge C
\xi^{-\theta/\al} U ^{\frac{1}{\al}}|U'|^{-\theta/\al},
\end{equation}
where we used that $(A + B)^{\al} \ge C_{\al}(A^{\al} + B^{\al})$
for $0<\al <1$.
%
%
Now we  choose the auxillary functions $\nu$ and $\eta$ by
\[
\nu(t) = (1+t)^{-1}, \qquad \eta(t)= (1+t)^{-\frac{d}{4}}.
\]
By the similar reasoning mentioned in \cite{PeVa}, we need to show
that $U(\xi)$ is finite for some $\xi>0$. Indeed, for $q= {
{2(d+2)}/{d}}$ we have
\begin{align*}
U(2\xi)=  & \int_0^T \int_{\{n \ge  2\xi \eta(t)\}}
\nu(t)\left (  n - 2\xi \eta(t) \right)_+^2 dxdt +  \int_0^T \int_{\{n \ge  2\xi \eta(t)\}}
\nu(t)\left (  c - 2\xi \eta(t) \right)_+^2 dxdt\\
& \le \int _0^T \nu(t) \int_{\{n \ge 2\xi \eta(t)\}} \left ( n  -
\xi \eta(t) \right)_+^{ \frac {2(d+2)}{d}}(\xi \eta(t))^{-\frac 4d}
dxdt\\
&\quad +\int _0^T \nu(t) \int_{\{n \ge 2\xi \eta(t)\}} \left ( c  -
\xi \eta(t) \right)_+^{ \frac {2(d+2)}{d}}(\xi \eta(t))^{-\frac 4d}
dxdt,
\end{align*}
where we used that $ n  - \xi \eta(t) \ge \xi\eta(t)$ on $\{ x| n -
2\xi \eta(t) \ge 0\}$.
Therefore, we obtain
\begin{align}\label{somefi}\begin{aligned}
 U(2\xi) & \le  \int _0^T \frac{\nu(t)}{(\xi\eta(t))^{\frac {4}{d}} }\int
   n ^{ \frac {2(d+2)}{d}} dxdt
  +   \int _0^T \frac{\nu(t)}{(\xi\eta(t))^{\frac {4}{d}} }\int
 c^{ \frac {2(d+2)}{d}} dxdt\\
 &
\le C \xi^{-\frac {4}{d}} \left ( \int_0^T \| n\|_{ \frac {2(d+2)}{d}}^{ \frac {2(d+2)}{d}} dt
+  \int_0^T \|c\|_{ \frac {2(d+2)}{d}}^{ \frac {2(d+2)}{d}} dt \right)\\
& \le C\xi^{-\frac {4}{d}} (  \|n_0\|^{ \frac {2(d+2)}{d}} _2 + \| c_0\|^{ \frac {2(d+2)}{d}}_2),
\end{aligned}\end{align}
where the last inequality in \eqref{somefi} is due to the case of
$K=0$ in \eqref{ee1} together with \eqref{trivialc} and \eqref{2q}.
Therefore, \eqref{somefi} implies that $U(\xi)$ is finite for every
$\xi>0$ as long as $U(\xi)$ exists. Via \eqref{CKL30-march28} and
\eqref{somefi}, we observe that
\[
U'(\xi)\le -C \xi^{-\frac{d+2}{d+6}}U^{\frac{d+4}{d+6}},\qquad \xi
>\xi_0
:= \eta^{-1}(0)\{\|n_0\|_{L^{\infty}}, \|c_0\|_{L^{\infty}}\}.
\]
and it is immediate that $U(\xi)$ vanish at a finite value $\xi =
M(\xi_0, \|n_0\|_{L^2}, \|c_0\|_{L^2}) $.
%
%
Summing up the arguments, we conclude that
$n(x,t) + c(x,t)  \le C M(1+t)^{-\frac d4}$  for $t> 0$.
This completes the proof.
\end{pfthm2}

\begin{remark}
The $L^2$ energy inequality \eqref{ee1} is responsible for the time
decay rate $t^{-d/4}$. The number coincides to that for the solution
of the heat equation with the initial data in $L^2(\bbr^d)$. We do
not know whether or not such decay estimate can be improved, and
thus we leave it an open question.
\end{remark}


\end{section}
\section{Blow up criteria of parabolic-hyperbolic system}

In this section, we consider \eqref{adKS}, which is the case that
equation of $c$ is of no diffusion.
First we construct solutions of \eqref{adKS} locally in time in the
following class of functions:
\[
X_{T}^{s} :=( C([0, T)\,;\, H^{s})\cap L^2(0, T; H^{s+1})\times
C([0, T)\, ; \, H^{s+1})\times ( C([0, T)\,;\, H^{s})\cap L^2(0, T;
H^{s+1}).
\]
Our construction of regular solutions is based on the method of
contraction mapping via linearizing the equations in an iterative
way. Next proposition is the first part of Theorem \ref{blowup2}.
\begin{proposition}\label{construct-sol}
Let initial data, $\chi$, $k$, and $\phi$ satisfy the assumptions in
Theorem \ref{blowup2}. Then there exists $T>0$ depending on $\|
n_0\|_{H^s}, \| c_0 \|_{H^{s+1}}, \| u_0 \|_{H^s}$ with integer
$s>2$ such that a unique solution $(n, c, u) $ in $X_{T}^{s}$
exists.
\end{proposition}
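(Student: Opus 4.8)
The plan is to construct the solution by a Picard-type iteration in which the system is linearized, obtain uniform bounds in $X_T^s$ on the iterates for $T$ small, prove contraction in a weaker norm, and pass to the limit. Set $(n^0,c^0,u^0)=(n_0,c_0,u_0)$ (constant in time) and, given $(n^k,c^k,u^k)$, define $(n^{k+1},c^{k+1},u^{k+1})$ (solved in this order) by the linear problems
\begin{align*}
&\partial_t n^{k+1} + u^k\cdot\na n^{k+1} - \Del n^{k+1} = -\na\cdot(\chi(c^k)\, n^k \na c^k),\\
&\partial_t c^{k+1} + u^k\cdot\na c^{k+1} = -k(c^k)\, n^{k+1},\\
&\partial_t u^{k+1} + u^k\cdot\na u^{k+1} - \Del u^{k+1} + \na p^{k+1} = -n^{k+1}\na\phi,\qquad \na\cdot u^{k+1}=0,
\end{align*}
all with initial data $(n_0,c_0,u_0)$. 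The first and third equations are linear parabolic with divergence-free drift $u^k$, for which $H^s$ energy (or maximal regularity) estimates hold; the second is a linear transport equation along the flow of $u^k$, with no smoothing.

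The first main step is to show that, for $T=T(\|n_0\|_{H^s},\|c_0\|_{H^{s+1}},\|u_0\|_{H^s})>0$ small, the iteration maps a ball $\mathcal{B}_R\subset X_T^s$ (with $R$ a fixed multiple of the data norm) into itself. For $n^{k+1}$ one applies $\partial^\al$, $|\al|\le s$, uses the transport commutator estimate $\|[\partial^\al,u^k\cdot\na]n^{k+1}\|_{L^2}\lesssim \|\na u^k\|_{L^\infty}\|n^{k+1}\|_{H^s}+\|u^k\|_{H^s}\|\na n^{k+1}\|_{L^\infty}$ (the drift term itself being skew by $\na\cdot u^k=0$), and the Moser-type bound $\|\chi(c^k)n^k\na c^k\|_{H^s}\lesssim C(\|c^k\|_{L^\infty})(1+\|c^k\|_{H^{s+1}})^s\|n^k\|_{H^s}$, valid since $H^s$ is a Banach algebra for $s>d/2$ and $\chi$ is of class $C^s$; integrating gives $\ddt\|n^{k+1}\|_{H^s}^2+\|\na n^{k+1}\|_{H^s}^2\lesssim(\text{data})$ and hence control of $\|n^{k+1}\|_{C_TH^s\cap L^2_TH^{s+1}}$. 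The estimate for $u^{k+1}$ in $C_TH^s\cap L^2_TH^{s+1}$ is analogous, with forcing $n^{k+1}\na\phi$ controlled in $L^2_TH^s$ via $\|\na^l\phi\|_{L^\infty}<\infty$; in the Stokes case the drift $u^k\cdot\na u^{k+1}$ is absent and this step is simpler.

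The delicate point — and the main obstacle — is the $c$-equation, which is purely hyperbolic. Differentiating to order $s+1$ and using only the transport commutator estimate yields
\[
\ddt\|c^{k+1}\|_{H^{s+1}}\lesssim \bkp{\|\na u^k\|_{L^\infty}+\|u^k\|_{H^{s+1}}}\|c^{k+1}\|_{H^{s+1}}+\|k(c^k)n^{k+1}\|_{H^{s+1}},
\]
with no parabolic gain on the left. Here $\|\na u^k\|_{L^\infty}\lesssim\|u^k\|_{H^s}$ is bounded in time, while $\|u^k\|_{H^{s+1}}$ is only $L^2$ in time — which is still enough after Cauchy-Schwarz, since $\int_0^t\|u^k\|_{H^{s+1}}\le T^{1/2}\|u^k\|_{L^2_TH^{s+1}}$. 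Crucially, $\|k(c^k)n^{k+1}\|_{H^{s+1}}\lesssim C(\|c^k\|_{L^\infty})(1+\|c^k\|_{H^{s+1}})^s\|n^{k+1}\|_{H^{s+1}}$ is integrable in time \emph{precisely because} the parabolic estimate on $n^{k+1}$ gives $n^{k+1}\in L^2_TH^{s+1}$; this is the one place where the $L^2_TH^{s+1}$-component of $X_T^s$ is essential. A Gronwall argument with $L^1$-in-time coefficients then controls $\|c^{k+1}\|_{C_TH^{s+1}}$, and smallness of $T$ closes the self-map.

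The second main step is convergence. I would show the iteration is a contraction in the weaker space $Y_T:=(C_TL^2\cap L^2_TH^1)\times C_TH^1\times(C_TL^2\cap L^2_TH^1)$: the differences $\delta n^k=n^{k+1}-n^k$, etc., solve the same linear equations with zero data and sources multilinear in the previous differences and in the uniformly bounded iterates, and the transport structure of the $\delta c^k$-equation is handled by an $H^1$ energy estimate that loses no derivatives, with $\|\delta n^{k+1}\|_{L^2_TH^1}$ on the right controlled by the $\delta n$-estimate. Shrinking $T$ gives $\|(\delta n^{k+1},\delta c^{k+1},\delta u^{k+1})\|_{Y_T}\le\frac12\|(\delta n^k,\delta c^k,\delta u^k)\|_{Y_T}$, so $(n^k,c^k,u^k)$ converges in $Y_T$; interpolating with the uniform $X_T^s$-bound gives strong convergence in $X_T^{s'}$ for every $s'<s$, which suffices to pass to the limit in all nonlinear terms. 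The limit $(n,c,u)$ inherits $n,u\in L^\infty_TH^s\cap L^2_TH^{s+1}$ and $c\in L^\infty_TH^{s+1}$ by weak-$*$ lower semicontinuity, and time continuity at top regularity is recovered in the usual way — weak continuity into the top space from the equation, upgraded by the energy identity (a Bona-Smith type argument) for $n$ and $u$, and directly for $c$ since $\partial_t c\in C_TH^s$. Uniqueness in $X_T^s$ follows from the same $Y_T$-type difference estimate for two solutions plus Gronwall. The recurring difficulty is exactly the lack of smoothing in the $c$-equation, which forces one to close the top-order estimate for $c$ using only the time-integrated $H^{s+1}$-regularity of $n$.
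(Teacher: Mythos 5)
Your proposal is correct and follows essentially the same route as the paper: a linearized Picard iteration, uniform $H^s\times H^{s+1}\times H^s$ energy bounds on the iterates via commutator and Moser-type estimates (with the key observation that the purely hyperbolic $c$-equation is closed by feeding in the $L^2_T H^{s+1}$ parabolic gain on $n$), followed by contraction in a lower-order norm. The only cosmetic difference is that you update $n$ before using it in the $c$- and $u$-equations, whereas the paper uses the previous iterate $n^{(m)}$ throughout; both variants work identically.
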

\begin{proof}
 We consider following linearized system, which is
defined iteratively (set $(n^0, c^0, u^0)=(n_0, c_0, u_0)$) over
$\R^d\times (0, T)$ with $d=2, 3$.
\begin{equation}\label{eq1}
\left\{
\begin{array}{l}
 \partial_t n^{(m+1)} + (u^{(m)} \cdot \nabla ) n^{(m+1)} -\Delta n^{(m+1)}=-\nabla \cdot [\chi(c^{(m)}) n^{(m)} \nabla c^{(m)}],\\
 \partial_t c^{(m+1)} +(u^{(m)} \cdot \nabla) c^{(m+1)} =-k (c^{(m)}) n^{(m)},\\
 \partial_t u^{(m+1)}+(u^{(m)} \cdot \nabla )u^{(m+1)} -\Delta u^{(m+1)} +\nabla p^{(m+1)} = n^{(m)} \nabla
 \phi,\\
 \mbox{div }u^{(m+1)}=0.
\end{array}
\right.
\end{equation}
$\bullet$ (Uniform boundedness) \quad If the initial data $(n_0,\,
c_0,\, u_0) \in H^s\times H^{s+1}\times H^s$ with integer $s >
\left[ \frac{d}{2}   \right]+1$, then we show $(n^{(m)}, c^{(m)},
u^{(m)})$ is uniformly bounded in $X_{T_0}^s$ for some $T_0>0$. Let
$\alpha=(\alpha_1, \cdots, \alpha_d)$ be a multi-index and
$|\alpha|:=\alpha_1+\cdots+\alpha_d$. Taking $D^{\alpha}$ operator
on the first equation in \eqref{eq1}, taking scalar product with
$D^{\alpha} n^{(m+1)}$ and summing over $|\alpha| \leq s$, we have
\[
\frac12 \frac{d}{dt} \| n^{(m+1)} \|_{H^{s}}^2 +\| \nabla n^{(m+1)} \|_{H^s}^2
\leq -\sum_{|\alpha|\leq s} \int_{\R^d} D^{\alpha}(u^{(m)}\cdot \nabla n^{(m+1)})\cdot D^{\alpha} n^{(m+1)} dx
\]
\[
+\sum_{|\alpha|\leq s} \int_{\R^d} D^{\alpha}( \chi(c^{(m)}) n^{(m)}
\nabla c^{(m)})\cdot D^{\alpha}\nabla n^{(m+1)} dx
:=I_1+I_2.
\]
Using cancellation and calculus inequality, we obtain
\[
|I_1| \leq \sum_{|\alpha|\leq s}\left|  \int_{\R^d}(D^{\alpha}(u^{(m)}\cdot \nabla n^{(m+1)})- u^{(m)}\cdot\nabla D^{\alpha} n^{(m+1)})\cdot D^{\alpha} n^{(m+1)} dx     \right|
\]
\[
\leq C (\| \nabla u^{(m)}\|_{L^{\infty}} \| n^{(m+1)}\|_{H^s} +\| u^{(m)}\|_{H^s} \| \nabla n^{(m+1)}\|_{L^{\infty}})\| n^{(m+1)}\|_{H^s}
\leq C \|u^{(m)}\|_{H^s} \| n^{(m+1)}\|_{H^s}^2.
\]
Using Young's inequality and interpolation inequality, we have
\[
|I_2|\leq C \sum_{|\alpha|\leq s} \| D^{\alpha} (\chi(c^{(m)}) n^{(m)} \nabla c^{(m)})\|_{L^2}^2 +\frac12 \| \nabla n^{(m+1)} \|_{H^s}^2
\]
\[
\leq C(1+ \| c^{(m)}\|_{H^{s+1}}^{2s}) \| n^{(m)} \|_{H^s}^2 \| c^{(m)} \|_{H^{s+1}}^2+\frac12 \| \nabla n^{(m+1)} \|_{H^s}^2.
\]
We find that
\[
 \frac{d}{dt} \| n^{(m+1)} \|_{H^{s}}^2 +\| \nabla n^{(m+1)} \|_{H^s}^2 \leq C \| u^{(m)} \|_{H^s} \| n^{(m+1)} \|_{H^s}^2
\]
\[
+C(1+ \| c^{(m)}\|_{H^{s+1}}^{2s}) \| n^{(m)} \|_{H^s}^2 \| c^{(m)} \|_{H^{s+1}}^2.
\]
Similarly, we have
\[
 \frac{d}{dt} \| c^{(m+1)}\|_{H^{s+1}}^2 \leq C \| \nabla u^{(m)}\|_{H^s} \| c^{(m+1)} \|_{H^{s+1}}^2
\]
\[
+C(\| \nabla n^{(m)}\|_{H^{s}}+\| n^{(m)}\|_{L^{\infty}})(1+\| c^{(m)}\|_{H^{s+1}}^s) \| c^{(m+1)}\|_{H^{s+1}},
\]
and
\[
\frac{d}{dt} \| u^{(m+1)} \|_{H^s}^2 +\| \nabla u^{(m+1)} \|_{H^s}^2 \leq C \| u^{(m)}\|_{H^s}\| u^{(m+1)} \|_{H^s}^2 +C \| n^{(m)} \|_{H^s}\| u^{(m+1)}\|_{H^s}.
\]
Adding above, we have
\[
 \frac{d}{dt} ( \| n^{(m+1)}\|_{H^s}^2+\| c^{(m+1)} \|_{H^{s+1}}^2 +\| u^{(m+1)}\|_{H^s}^2) +\| \nabla n^{(m+1)}\|_{H^s}^2+\| \nabla u^{(m+1)} \|_{H^s}^2
\]
\[
\leq C( \| u^{(m)}\|_{H^s} +\| \nabla u^{(m)} \|_{H^s} + \| \nabla n^{(m)}\|_{H^s}+\| n^{(m)}\|_{L^{\infty}}+1) \times
\]
\[
( \| n^{(m+1)}\|_{H^s}^2+\| c^{(m+1)} \|_{H^{s+1}}^2 +\| u^{(m+1)}\|_{H^s}^2)+C(1+ \| c^{(m)}\|_{H^{s+1}}^{2s}) \| n^{(m)} \|_{H^s}^2 \| c^{(m)} \|_{H^{s+1}}^2
\]
\[
+C(\| \nabla n^{(m)}\|_{H^{s}}+\| n^{(m)}\|_{L^{\infty}})(1+\| c^{(m)}\|_{H^{s+1}}^s)^2+C\| n^{(m)}\|_{H^s}^2.
\]
Gronwall's inequality gives uniform boundedness in $X_{T_0}^s$ for
some $T_0 >0$, because
\[
\sup (  \| n^{(m+1)}\|_{H^s}^2+\| c^{(m+1)} \|_{H^{s+1}}^2 +\| u^{(m+1)}\|_{H^s}^2)+\int_0^{T_0} \| \nabla n^{(m+1)}\|_{H^s}^2+\| \nabla u^{(m+1)} \|_{H^s}^2 dt
\]
\[
\leq \left( \| n_0\|_{H^s}^2+\| c_0\|_{H^{s+1}}^2+\| u_0\|_{H^s}^2+CT_0^{1/2}(M^{1/2}+M^{s+2})  \right)\exp \left(CT_0^{1/2}(1+M^{1/2})\right),
\]
under the hypothesis
\[
\sup_{t\in [0, T_0]} (  \| n^{(m)}\|_{H^s}^2+\| c^{(m)} \|_{H^{s+1}}^2 +\| u^{(m)}\|_{H^s}^2)+\int_0^{T_0} \| \nabla n^{(m)}\|_{H^s}^2+\| \nabla u^{(m)} \|_{H^s}^2 dt\leq M.
\]
$\bullet$ (Convergence)\quad To show  that $\{ (n^{(m)},\,
c^{(m)},\, u^{(m)})\}$ is a Cauchy sequence in $X^s_T$
for some $0<T_1 <T_0$, we consider the equations of the difference
of solutions
\begin{equation*}
\left\{
\begin{array}{l}
 \partial_t( n^{(m+1)}-n^{(m)})-\Delta (n^{(m+1)}-n^{(m)}) + (u^{(m)} \cdot \nabla ) (n^{(m+1)}-n^{(m)})+(u^{(m)}-u^{(m-1)})\nabla n^{(m)}
 \\ \qquad =-\nabla \cdot [\chi(c^{(m)}) n^{(m)} \nabla c^{(m)}]+\nabla \cdot [\chi(c^{(m-1)}) n^{(m-1)} \nabla c^{(m-1)}],\\
 \partial_t (c^{(m+1)}-c^{(m)}) +(u^{(m)} \cdot \nabla)( c^{(m+1)}-c^{(m)})+(u^{(m)}-u^{(m-1)})\cdot \nabla c^{(m)}\\
 \qquad =-k (c^{(m)}) n^{(m)}+k(c^{(m-1)})n^{(m-1)},\\
 \partial_t (u^{(m+1)}-u^{(m)})-\Delta (u^{(m+1)}-u^{(m)})+(u^{(m)} \cdot \nabla )(u^{(m+1)}-u^{(m)})\\
 \qquad +(u^{(m)}-u^{(m-1)})\cdot\nabla u^{(m)}  +\nabla (p^{(m+1)}-p^{(m)}) = (n^{(m)}-n^{(m-1)}) \nabla
 \phi,\\
\mbox{div }(u^{(m+1)}-u^{(m)})=0.
\end{array}
\right.
\end{equation*}
Following the arguments similarly in \cite{ckl}, we can prove the
convergence. Since its verification is rather straightforward, the
details are omitted.
\end{proof}

To obtain the  blow-up criteria in Theorem \ref{blowup2}, we derive
lengthy a priori estimates. Especially, the estimates of $\| \nabla
u\|_{L^2_tL^{\infty}_x}$ is crucial. To obtain a bound of $\| \nabla
u\|_{L^2_tL^{\infty}_x}$, we first use vorticity estimates to obtain
$L^2$ estimates of $\nabla \omega$, and then, we obtain the
estimates $\| \nabla u \|_{L^{2, \infty}_{t,x}}$ by using the mixed
norms $L^{q,p}_{t,x}$ type estimates for Stokes system (see e.g.
\cite{giso}). Then the desired blow-up criterion can be obtained by
an induction argument. This is the outline of the second part of
Theorem \ref{blowup2} and now we give the proof.
\\
\begin{pfthm3}
Since construction of local solution is done in Proposition
\ref{construct-sol}, it remains to show the blow-up criteria for
$(2D)$ and $(3D)$. We will show those criteria by obtaining a priori
estimates as in the below steps for $[0, T]$ for any $T<T^{*}$,
where $T^*$ is the maximal time of existence. Since $\int_0^{T} \| n
\|_{L^{\infty}}^2 dt <\infty$ and $\| n(t)\|_{L^1} =\| n_0\|_{L^1}$
for all $t\in [0,T]$, we note that $\int_0^T \| n \|_{L^p}^2 dt <
\infty$ for all $p<\infty$.
\\
$\bullet$\,\,(Case $\R^2$)\quad
At first, we consider the case $d=2$.\\
\underline{Step 1-1} ($L^2 \times H^1\times H^1$ Estimates of $(n,
c, u)$).\quad Testing $u$ to the equation of $u$, we have
\[
\frac12 \frac{d}{dt} \| u \|_{L^2}^2 +\| \nabla u \|_{L^2}^2 \leq C\| n \|_{L^2}\| u \|_{L^2}\leq \| n\|_{L^2}^2 +C\| u \|_{L^2}^2.
\]
It follows from integration in time that
\begin{equation}\label{CK10L-march31}
\sup_{0<t\leq T} \| u (t)\|_{L^2}^2 +\int_0^T \| \nabla u \|_{L^2}^2 dt \leq C(\| u_0 \|_{L^2}^2 +\int_0^T \| n \|_{L^2}^2 dt).
\end{equation}
Consider the equation of the vorticity $\omega =\partial_1 u_2-\partial_2 u_1$.
\begin{equation}\label{eq-omega}
\partial_t \omega +(u \cdot \nabla )\omega -\Delta \omega =\nabla \times (n \nabla \phi).
\end{equation}
 Multiplying \eqref{eq-omega} with $\omega$ and integrating, we have
\[
\frac12 \frac{d}{dt} \| \omega \|_{L^2}^2 +\| \nabla \omega \|_{L^2}^2 \leq \left|\int_{\R^2} n \nabla \phi \times \nabla \omega dt \right|
\leq C\| n\|_{L^2} \| \nabla \omega \|_{L^2} \leq C \| n \|_{L^2}^2
+\frac12 \| \nabla \omega \|_{L^2}^2,
\]
and, therefore, we obtain
\[
\sup_{0 <t \leq T} \| \omega(t)\|_{L^2}^2 +\int_0^T \| \nabla \omega \|_{L^2}^2 dt \leq \| \omega_0\|_{L^2}^2 +\int_0^T \| n \|_{L^2}^2 dt.
\]
On the other hand, due to mixed norm estimate of Stokes system (see
e.g. \cite{giso}), we note that for any $p \in (1, \infty)$
\begin{equation}\label{gi-so}
\int_0^T \| \Delta u \|_{L^p}^2 dt \leq C\| u_0 \|_{H^2}^2
+C\int_0^T \| n (t)\|_{L^p}^2 dt +C\int_0^T \| u\|_{L^{2p}}^2\|
\nabla u \|_{L^{2p}}^2 dt< \infty,
\end{equation}
where we used
\[
\int_0^T \| (u\cdot \nabla )u \|_{L^p}^2 dt \leq \int_0^T \| u\|_{L^{2p}}^2\| \nabla u \|_{L^{2p}}^2 dt\leq \| u \|_{L^{\infty}(0, T; L^{2p})}^2 \| \nabla u \|_{L^2(0, T; L^{2p})}^2.
\]
Hence it follows that $\| \nabla u \|_{L^2(0, T; L^{\infty})}<
\infty$.
Next, testing $n$ to the equation of $n$, we obtain
\begin{equation}\label{C10KL-march30}
\frac12 \frac{d}{dt} \| n (t) \|_{L^2}^2 +\| \nabla n \|_{L^2}^2 \leq \int_{\R^2} \chi(c) n \nabla c \nabla n dx
\leq \frac14 \| \nabla n \|_{L^2}^2 +C\| n \|_{L^{\infty}}^2 \| \nabla c \|_{L^2}^2.
\end{equation}
Taking $\nabla $ on the equation of $c$, multiplying $\nabla c$ and integrating over $\R^2$ yield that
\[
\frac12 \frac{d}{dt} \| \nabla c \|_{L^2}^2\leq \left| \int_{\R^2} \nabla u \nabla c \nabla c dx \right| +\left|  \int_{\R^2} \nabla (k(c)n ) \nabla c dx \right|
\]
\begin{equation}\label{C20KL-march30}
\leq C\| \nabla u\|_{L^{\infty}} \| \nabla c\|_{L^2}^2
+C\| n \|_{L^{\infty}}\| \nabla c\|_{L^2}^2 +C\| n \|_{L^{\infty}}\| \nabla c \|_{L^2}^2 +\frac14 \| \nabla n \|_{L^2}^2.
\end{equation}
If we add the above two inequalities \eqref{C10KL-march30} and
\eqref{C20KL-march30}, then we have
\begin{equation}\label{C30KL-march30}
\frac{d}{dt} (\| n(t)\|_{L^2}^2+\| \nabla c(t)\|_{L^2}^2) +\| \nabla n \|_{L^2}^2
\leq C(\| \nabla u \|_{L^{\infty}}+C\| n \|_{L^{\infty}}^2 +1) (\| n \|_{L^2}^2+\| \nabla c \|_{L^2}^2).
\end{equation}
Using Gronwall's Lemma, we have
\[
\sup_{ t \in (0, T]}( \| n (t)\|_{L^2}^2+\| \nabla c(t) \|_{L^2}^2) +\int_0^T \| \nabla n(t)\|_{L^2}^2 dt\leq C(\| n_0 \|_{L^2}^2 +\| \nabla c_0 \|_{L^2}^2) \]
\[
\times\exp (C(T^{1/2}\| \nabla u \|_{L^2(0,T; L^{\infty})}+\| n\|_{L^2(0, T; L^{\infty})}^2 +T))\int_0^T ( \| \nabla u \|_{L^{\infty}}+C\| n \|_{L^{\infty}}^2 +1)dt< \infty.
\]
\underline{Step 1-2}\,\, (Induction argument)\quad Assuming that for
an integer $m$ with $1 \leq m$
\begin{equation}\label{CKL10-march30}
n \in L^{\infty}(0, T; H^{m-1}) \cap L^2(0, T; H^m),\quad c \in
L^{\infty}(0, T: H^{m})
\end{equation}
and
\begin{equation}\label{CKL20-march30}
u \in L^{\infty}(0, T; H^{m})\cap L^2(0, T; H^{m+1}).
\end{equation}
we will show that
\[
n \in L^{\infty}(0, T; H^{m}) \cap L^2(0, T; H^{m+1}),\qquad c \in
L^{\infty}(0, T: H^{m+1}),
\]
\[
u \in L^{\infty}(0, T; H^{m+1})\cap L^2(0, T; H^{m+2}).
\]
First, we take $D^{\alpha}$ operator ($\alpha=(\alpha_1, \alpha_2)$
is a multi index satisfying $|\alpha|=\alpha_1+\alpha_2$,
$|\alpha|\leq m+1$) with the equations of $u$, scalar product them
with $D^{\alpha}u$ and sum over $|\alpha| \leq m+1$, we obtain
\[
\frac12 \frac{d}{dt} \| u \|_{H^{m+1}}^2 +\| \nabla u \|_{H^{m+1}}^2
\leq -\sum_{|\alpha|\leq m+1}\int_{\R^2} D^{\alpha}((u\cdot\nabla)u)
D^{\alpha} u dx + C\| n \|_{H^m} \| \nabla u \|_{H^{m+1}}.
\]
If we use the commutator estimates such that
\[
\abs{\sum_{|\alpha|=0}^{ m+1}\int D^{\alpha}((u\cdot\nabla)u)
D^{\alpha} u } =\abs{\sum_{|\alpha|=0}^{ m+1}\int
[D^{\alpha}((u\cdot\nabla)u)-(u\cdot\nabla)D^{\alpha}u] D^{\alpha} u
}
\leq C\| \nabla u\|_{L^{\infty}} \| u\|_{H^{m+1}}^2,
\]
then we have
\[
 \frac{d}{dt} \| u \|_{H^{m+1}}^2 +\| \nabla u \|_{H^{m+1}}^2 \leq C\| \nabla u \|_{L^{\infty}} \| u \|_{H^{m+1}}^2+C \| n\|_{H^m}^2.
\]
Gronwall's inequality gives us that
\[
u \in L^{\infty}(0, T; H^{m+1})\cap L^2(0, T; H^{m+2}).
\]
Next, we take $D^{\alpha}$ operator ($\alpha=(\alpha_1, \alpha_2)$
is a multi index satisfying $|\alpha|=\alpha_1+\alpha_2$,
$|\alpha|\leq m$) with the equations of $n$ , scalar product them
with $D^{\alpha}n$ and sum over $|\alpha| \leq m$, we obtain
\[
\frac12 \frac{d}{dt} \| n \|_{H^{m}}^2 +\| \nabla n \|_{H^{m}}^2
\leq -\sum_{|\alpha|\leq m}\int_{\R^2} D^{\alpha}((u\cdot\nabla)n)
D^{\alpha} n dx + C\| \chi(c) n \nabla c \|_{H^m} \| \nabla n
\|_{H^{m}}.
\]
Using integration by parts (choose $\alpha_j \ne 0$) and calculus
inequality, we have
\[
\left|\sum_{|\alpha|\leq m}\int_{\R^2}  D^{\alpha}((u\cdot\nabla)n)
D^{\alpha}n dx\right|=\left|\sum_{|\alpha|\leq m}\int_{\R^3}
D^{\alpha-e_j}((u\cdot\nabla)n) D^{\alpha+e_j} n dx\right|
\]
\[
\leq C \| u \cdot \nabla n \|_{H^{m-1}} \| \nabla n \|_{H^m} \leq
C(|u|_{L^{\infty}} \| n \|_{H^m} +C \| u\|_{W^{m-1, \infty}}\|
\nabla n \|_{L^2}) \| \nabla n \|_{H^m}
\]
\bq\label{calcul} \leq C(\| u\|_{L^{\infty}}^2+ \| u\|_{W^{m-1,
\infty}}^2)\| n \|_{H^m}^2 +\frac16 \| \nabla n \|_{H^m}^2. \eq Also
we have
\[
\| \chi(c) n \nabla c\|_{H^m} \leq C \| \nabla c\|_{L^4} \| n
\|_{W^{m,4}} +C \| n \|_{L^{\infty}} \| \chi (c) \nabla c\|_{H^m}
\]
\bq\label{bound-chi} \leq C\| \nabla c \|_{L^2}^{\frac12} \| \nabla
c\|_{H^1}^{\frac12} \| n \|_{H^m}^{\frac12} \| \nabla n
\|_{H^m}^{\frac12} +C_1 \| n \|_{L^{\infty}}(\| \nabla
c\|_{H^m}+C_2), \eq where $C_1$ and $C_2$ are absolute constants
depending only on $\| c\|_{H^m}$, which is bounded in the inductive
assumption $(m-1)$-th step. Using \eqref{calcul} and
\eqref{bound-chi}, we have
\[
\frac12\frac{d}{dt} \| n \|_{H^m}^2 +\| \nabla n \|_{H^m}^2 \leq
C(\| u\|_{L^{\infty}}^2+ \| u\|_{W^{m-1, \infty}}^2)\| n \|_{H^m}^2
\]
\begin{equation}\label{blow-up-est-n}
+C (\| \nabla c\|_{L^2}^2 \|n \|_{H^m}^2+\| n \|_{L^{\infty}}^2)\|
\nabla c\|_{H^m} +C\| n \|_{L^{\infty}}^2 +\frac13 \| \nabla n
\|_{H^m}^2.
\end{equation}
Similarly, taking $H^{m+1}$ scalar product equation of $c$ with
$D^{\alpha} c$ and summing over $|\alpha|\leq m+1$,
\[
\frac12\frac{d}{dt} \| c\|_{H^{m+1}}^2 \leq -\sum_{|\alpha|\leq
m+1}\int_{\R^2} D^{\alpha}((u\cdot\nabla)c) D^{\alpha}c dx + C\|
k(c) n  \|_{H^{m+1}} \| c \|_{H^{m+1}}.
\]
Using commutator estimates
\[
\left|\sum_{|\alpha|\leq m+1}\int_{\R^2}
D^{\alpha}((u\cdot\nabla)c) D^{\alpha} c
dx\right|=\left|\sum_{|\alpha|\leq m+1}\int_{\R^2}
[D^{\alpha}((u\cdot\nabla)c)-(u\cdot\nabla)D^{\alpha}c] D^{\alpha} c
dx\right|
\]
\[
\leq C\| \nabla u\|_{L^{\infty}} \| c\|_{H^{m+1}}^2+C \| \nabla
c\|_{L^4} \| u \|_{W^{m+1, 4}}\| c\|_{H^{m+1}}
\]
\begin{equation}\label{C10KL10-march30}
\leq C\| \nabla u\|_{L^{\infty}} \| c\|_{H^{m+1}}^2 +C\| \nabla
c\|_{L^2}^{\frac12} \| \nabla c\|_{H^1}^{\frac12}\|
u\|_{H^{m+1}}^{\frac12} \|\nabla u\|_{H^{m+1}}^{\frac12} \|
c\|_{H^{m+1}}
\end{equation}
and Leipniz formula
\[
\| k(c) n \|_{H^{m+1}} \leq C \| k (c)\|_{H^{m+1}}\| n
\|_{L^{\infty}} +C \| k(c)\|_{L^{\infty}} \| \nabla n \|_{H^{m+1}}
\]
\[
\leq (C_1\| c\|_{H^{m+1}}+C_2)\| n \|_{L^{\infty}}+C\| \nabla n
\|_{H^{m+1}},
\]
where  $C_1$ and $C_2$ are absolute constants  depending only on $\|
c\|_{H^m}$ bounded in the inductive assumption $(m-1)$-th step, we
have
\[
\frac12\frac{d}{dt} \| c\|_{H^{m+1}}^2 \leq C(\| \nabla u
\|_{L^{\infty}}+ \| n\|_{L^{\infty}}+1) \| c\|_{H^{m+1}}^2+C\| n
\|_{L^{\infty}}^2
\]
\bq\label{blow-up-est-c} +C \| \nabla c\|_{L^2}^2\|
u\|_{H^{m+1}}^2\| \nabla u\|_{H^{m+1}}^2+\frac16 \| \nabla n
\|_{H^{m+1}}. \eq Adding \eqref{blow-up-est-n} and
\eqref{blow-up-est-c}, we have
\[
\frac{d}{dt} (\| n \|_{H^m}^2 +\| c\|_{H^{m+1}}^2 )+\| \nabla n
\|_{H^{m+1}}^2
\]
\[
\leq C( \| \nabla c\|_{L^2}^2 \|n \|_{H^m}^2+\| n
\|_{L^{\infty}}^2+\| \nabla u \|_{L^{\infty}}+ \|
n\|_{L^{\infty}}+1)(\| n \|_{H^m}^2 +\| c\|_{H^{m+1}}^2 )
\]
\[
+C\| n \|_{L^{\infty}}^2+C \| \nabla c\|_{L^2}^2\| u\|_{H^{m+1}}^2\|
\nabla u\|_{H^{m+1}}^2.
\]
Since
\[
\| \nabla c\|_{L^{\infty}(0, T; L^2)}^2 \| n \|_{L^2(0, T; H^m)}^2
+\| n \|_{L^2(0, T; L^{\infty})}^2\]
\[+\| \nabla u \|_{L^1(0, T; L^{\infty})}
+\| \nabla c\|_{L^{\infty}(0, T; L^2)}^2\| u \|_{L^{\infty}(0, T;
H^{m+1})}^2 \| \nabla u \|_{L^2(0, T; H^{m+1})}^2<\infty,
\]
it follows via Gronwall's inequality that
\[
\| n \|_{L^{\infty}(0, T; H^m)} +\| c\|_{L^{\infty}(0, T; H^{m+1})}
+\| \nabla n \|_{L^2(0, T; H^m)} < \infty.
\]
This completes the proof of 2D case.
\\
%
\\
$\bullet$\,\,(Case $\R^3$)\quad Next, we consider the case $d=3$.\\
\underline{Step 2-1}\,\,($L^2 \times H^1\times H^1$ Estimates of
$(n, c, u)$).\quad Following similar computations as in 2D case, we
also have the estimate \eqref{CK10L-march31}.
We recall the equation of the vorticity $\omega =\nabla \times
\omega$
\begin{equation}\label{eq-omega-3d}
\partial_t \omega +(u \cdot \nabla )\omega -\Delta \omega =(\omega \cdot \nabla)u+\nabla \times (n \nabla \phi).
\end{equation}
Multiplying \eqref{eq-omega} with $\omega$ and integrating in
spatial variables, we have
\[
\frac12 \frac{d}{dt} \| \omega \|_{L^2}^2 +\| \nabla \omega \|_{L^2}^2 \leq \left|\int_{\R^2} n \nabla \phi \times \nabla \omega dt \right|+\left| \int_{\R^2} | u\omega| |\nabla \omega| dx   \right|
\]
\[
\leq C\| n\|_{L^2} \| \nabla \omega \|_{L^2} +\| u \omega \|_{L^2}\| \nabla \omega\|_{L^2}\leq C \| n \|_{L^2}^2 +C \| u \omega \|_{L^2}^2+\frac14 \| \nabla \omega \|_{L^2}^2
\]
\[
\leq C \| u \|_{L^\beta}^2 \| \omega \|_{L^{\frac{2\beta}{\beta-2}}} +C \| n \|_{L^2}^2+\frac14 \| \nabla \omega \|_{L^2}^2
\leq C\| u \|_{L^\beta}^{\frac{2\beta}{2\beta-3}} \| \omega
\|_{L^2}^2 +C \| n \|_{L^2}^2+\frac12 \| \nabla \omega \|_{L^2}^2
\]
and, therefore, we have
\[
\sup_{0 <t \leq T} \| \omega(t)\|_{L^2}^2 +\int_0^T \| \nabla \omega \|_{L^2}^2 dt
\leq \left(\| \omega_0\|_{L^2}^2 +C\int_0^T \| n \|_{L^2}^2 dt\right)\exp
\left(C\int_0^T \| u \|_{L^\beta}^{\frac{2\beta}{2\beta-3}} dt    \right).
\]
Using the mixed norm estimate of Stokes system, we note that
\begin{equation}\label{gi-so-32}
\int_0^T \| \Delta u \|_{L^3}^2 dt \leq  C\| u_0 \|_{H^1}^2
+C\int_0^T \| n (t)\|_{L^3}^2 dt+C\int_0^T \| u\|_{L^{6}}^2\| \nabla
u \|_{L^{6}}^2 dt< \infty,
\end{equation}
where we used
\[
\int_0^T \| (u \cdot\nabla )u \|_{L^3}^2 dt \leq \int_0^T \| u\|_{L^{6}}^2\| \nabla u \|_{L^{6}}^2 dt\leq \| \omega \|_{L^{\infty}(0, T; L^{2})}^2 \| \nabla \omega\|_{L^2(0, T; L^{2})}^2.
\]
Again with aid of the estimate of Stokes system,
we have
\begin{equation}\label{gi-so-42}
\int_0^T \| \Delta u \|_{L^4}^2 dt \leq C\| u_0 \|_{H^2}^2
+C\int_0^T \| n (t)\|_{L^4}^2 dt+C\int_0^T \| u\|_{L^{6}}^2\| \nabla
u \|_{L^{12}}^2 dt< \infty,
\end{equation}
where we used
\[
\int_0^T \| (u\cdot \nabla )u \|_{L^4}^2 dt \leq \int_0^T \| u\|_{L^{6}}^2\| \nabla u \|_{L^{12}}^2 dt\leq \| \omega \|_{L^{\infty}(0, T; L^{2})}^2 \| \Delta u\|_{L^2(0, T; L^{3})}^2.
\]
Hence it is direct that $\| \nabla u \|_{L^2(0, T; L^{\infty})}<
\infty$. Next, testing $n$ to the equation of $n$ as in 2D case, we
also have \eqref{C10KL-march30}. For equation of $c$, we can obtain
\eqref{C20KL-march30} without any modification, and therefore, it is
immediate that \eqref{C30KL-march30}.
Using Gronwall's Lemma, we obtain
\[
\sup_{ t \in (0, T]}( \| n (t)\|_{L^2}^2+\| \nabla c(t) \|_{L^2}^2) +\int_0^T \| \nabla n(t)\|_{L^2}^2 dt\leq C(\| n_0 \|_{L^2}^2 +\| \nabla c_0 \|_{L^2}^2) \]
\[
\times\exp (C(T^{1/2}\| \nabla u \|_{L^2(0,T; L^{\infty})}+\| n\|_{L^2(0, T; L^{\infty})}^2 +T))\int_0^T ( \| \nabla u \|_{L^{\infty}}+C\| n \|_{L^{\infty}}^2 +1)dt< \infty.
\]
\underline{Step 2-2}\,\, (Induction argument)\quad As in 2D case,
most of all estimates are the same as those given above. Therefore,
we just mention different estimates compared to 2D case. Up to
estimate \eqref{blow-up-est-n}, all estimates are exactly the same
as before and however, the following is slightly different form of
estimate (compare to \eqref{C10KL10-march30}). Indeed, using
commutator estimates,
\[
\left|\sum_{|\alpha|\leq m+1}\int_{\R^3} D^{\alpha}((u\cdot\nabla)c)
D^{\alpha} c dx\right|=\left|\sum_{|\alpha|\leq m+1}\int_{\R^3}
[D^{\alpha}((u\cdot\nabla)c)-(u\cdot\nabla)D^{\alpha}c] D^{\alpha} c
dx\right|
\]
\[
\leq C\| \nabla u\|_{L^{\infty}} \| c\|_{H^{m+1}}^2+C \| \nabla
c\|_{L^3} \| u \|_{W^{m+1, 6}}\| c\|_{H^{m+1}},
\]
\[
\leq C\| \nabla u\|_{L^{\infty}} \| c\|_{H^{m+1}}^2+C\| \nabla
c\|_{L^2}^{\frac12} \| \nabla c\|_{H^1}^{\frac12} \|\nabla
u\|_{H^{m+1}} \| c\|_{H^{m+1}}.
\]
With the above modification, to sum up, we have
\[
\frac{d}{dt} (\| n \|_{H^m}^2 +\| c\|_{H^{m+1}}^2 )+\| \nabla n
\|_{H^{m+1}}^2
\]
\[
\leq C( \| \nabla c\|_{L^2}^2 \|n \|_{H^m}^2+\| n
\|_{L^{\infty}}^2+\| \nabla u \|_{L^{\infty}}+ \|
n\|_{L^{\infty}}+\| \nabla u \|_{H^{m+1}}+1)(\| n \|_{H^m}^2 +\|
c\|_{H^{m+1}}^2 ) +C\| n \|_{L^{\infty}}^2.
\]
Under the same assumption as \eqref{CKL10-march30} and
\eqref{CKL20-march30}, Gronwall's inequality implies that
\[
\| n \|_{L^{\infty}(0, T; H^m)} +\| c\|_{L^{\infty}(0, T; H^{m+1})}
+\| \nabla n \|_{L^2(0, T; H^m)} < \infty.
\]
This finishes the case of 3D and therefore, proof is completed.
\end{pfthm3}

\section{{Proof of Theorem \ref{Theorem6}}}

%

In this section, we present the proof of Theorem \ref{Theorem6}. The
following lemma shows weighted energy estimate and truncated energy
estimate shown in \cite{CPZ2} in case that fluid is not coupled. It
is remarkable that even in the presence of fluid equations,
influence of fluid does not appear. Indeed, incompressibility causes
cancelation of terms involving velocity of fluid, which is a crucial
observation for the proof of Theorem \ref{Theorem6}.

\begin{lemma}\label{weighted-estimate}
Let $\phi(\cdot)$ be an auxiliary function such that $\phi'(c) -
\phi(c)\chi(c)=0$ and $K$ a positive number. Then, the classical
solutions to \eqref{adKS} satisfy the following weighted energy
equality \eqref{auxillary} and truncated energy equality
\eqref{truncated}:
\[
\frac d {dt} \int_{\R^d} \left( \frac n {\phi(c)} \right)^p \phi(c)
+ 4  \frac {p-1} p \int_{\R^d} \phi(c) \left| \na \left( \frac n
{\phi(c)} \right)^{p/2} \right|^2
\]
\begin{equation}\label{auxillary}
= (p-1) \int_{\R^d} \phi^2 (c) \chi(c) k (c) \left( \frac n
{\phi(c)} \right)^{p+1},
\end{equation}
\[
\ddt  \int_{\R^d} \left( \frac n {\phi(c)} - K \right)_+^p \phi(c)
+2\frac{p-1}{p} \int \phi(c) \abs{\na \left( \frac n {\phi(c)} - K
\right)_+^{p/2}}^2
\]
\[
=(p-1)\int \left( \frac n {\phi(c)} - K \right)_+^{p+1} \phi^2(c)
\chi(c) k(c) + (2p-1)K \int \phi^2(c) \chi(c)k(c) \left( \frac n
{\phi(c)} - K
 \right)_+^p
\]
\begin{equation}\label{truncated}
 + pK^2 \int \phi(c)^2 \chi(c)k(c) \left( \frac n {\phi(c)} - K
\right)_+^{p-1}.
\end{equation}


 \end{lemma}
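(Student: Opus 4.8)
The plan is to derive both identities from a single nonlinear parabolic equation for the quotient $w := n/\phi(c)$, tested against $p\,w^{p-1}$ for \eqref{auxillary} and against $p\,(w-K)_+^{p-1}$ for \eqref{truncated}. First I would record the equation for $w$. Since $\phi'(c) = \phi(c)\chi(c)$, differentiating $w=n/\phi(c)$ gives $\na n = \phi(c)\na w + \chi(c)\,n\,\na c$, hence $-\Delta n + \na\cdot(\chi(c)n\na c) = -\na\cdot(\phi(c)\na w)$, so the first equation of \eqref{adKS} reads $\pa_t n + u\cdot\na n = \na\cdot(\phi(c)\na w)$. Writing $\pa_t n = \phi(c)\pa_t w + \phi(c)\chi(c)w\,\pa_t c$ and $u\cdot\na n = \phi(c)\,u\cdot\na w + \phi(c)\chi(c)w\,u\cdot\na c$, and using the second equation in the form $\pa_t c + u\cdot\na c = -k(c)n$, the two extra terms collapse to $\phi(c)\chi(c)w(\pa_t c + u\cdot\na c) = -\phi(c)^2\chi(c)k(c)w^2$, so that
\[
\phi(c)\big(\pa_t w + u\cdot\na w\big) - \na\cdot\big(\phi(c)\na w\big) = \phi(c)^2\chi(c)k(c)\,w^2 .
\]
Note the fluid velocity now enters only through $u\cdot\na w$.

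For \eqref{auxillary}, multiply this equation by $p\,w^{p-1}$ and integrate over $\R^d$. The diffusion term gives $\frac{4(p-1)}{p}\int\phi(c)\,|\na w^{p/2}|^2$ after one integration by parts, and the quadratic right-hand side gives $p\int\phi(c)^2\chi(c)k(c)\,w^{p+1}$. For the transport term, $\int p\,w^{p-1}\phi(c)\,u\cdot\na w = \int\phi(c)\,u\cdot\na(w^p)$, which after integration by parts and $\na\cdot u = 0$ equals $-\int u\cdot\na\phi(c)\,w^p = -\int\phi'(c)\,u\cdot\na c\,w^p$. The key cancellation is that this combines with the time term $\int p\,w^{p-1}\phi(c)\,\pa_t w = \ddt\int\phi(c)w^p - \int\phi'(c)\,\pa_t c\,w^p$ to produce $\ddt\int\phi(c)w^p - \int\phi'(c)(\pa_t c + u\cdot\na c)w^p = \ddt\int\phi(c)w^p + \int\phi'(c)k(c)n\,w^p$; since $\int\phi'(c)k(c)n\,w^p = \int\phi(c)^2\chi(c)k(c)\,w^{p+1}$, this cancels exactly one of the $p$ copies of the source, leaving the coefficient $p-1$, i.e.\ \eqref{auxillary}.

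For \eqref{truncated} I would rerun the computation with $p\,(w-K)_+^{p-1}$ in place of $p\,w^{p-1}$. Since $(w-K)_+^{p-1}$ and its gradient are supported on $\{w>K\}$, where $\na(w-K)_+ = \na w$ a.e.\ and the truncation vanishes on the boundary of its support, every integration by parts and the chain rule carry over verbatim with $(w-K)_+$ replacing $w$ (legitimate for a.e.\ $t$ by the standard smoothing of $x\mapsto (x-K)_+^{p/2}$), and the transport term cancels the time term in exactly the same way. The only new computation is in the source: on $\{w>K\}$ substitute $w^2 = (w-K)_+^2 + 2K(w-K)_+ + K^2$, splitting $p\int\phi(c)^2\chi(c)k(c)(w-K)_+^{p-1}w^2$ into three pieces; combining the top-order piece with the leftover of the cancellation (which now contributes $\int\phi(c)^2\chi(c)k(c)(w-K)_+^{p+1} + K\int\phi(c)^2\chi(c)k(c)(w-K)_+^{p}$) produces precisely the coefficients $(p-1)$, $(2p-1)K$, $pK^2$ in \eqref{truncated}.

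The main obstacle is not a single hard estimate but the bookkeeping that keeps the $\pa_t c$-term and the convective term together so that they recombine as $-\int\phi'(c)(\pa_t c + u\cdot\na c)w^p$; it is exactly here that incompressibility is indispensable, since without $\na\cdot u = 0$ the transport term would leave an uncancelled $\int(\na\cdot u)\,\phi(c)w^p$ and the clean equalities would be destroyed — this is the cancellation phenomenon highlighted before the lemma. The remaining points are routine for classical solutions: the absence of boundary contributions at infinity follows from the decay built into the class $X_T^s$ of Proposition \ref{construct-sol}, and the truncation chain rule is standard.
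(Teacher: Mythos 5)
Your proposal is correct and follows essentially the same route as the paper: you derive the equation $\phi(c)(\pa_t w + u\cdot\na w) - \na\cdot(\phi(c)\na w) = \phi^2(c)\chi(c)k(c)w^2$ for $w=n/\phi(c)$ (the paper's \eqref{CK10L10-march30} in slightly different packaging), test with $p\,w^{p-1}$ and $p\,(w-K)_+^{p-1}$, and exploit the same two cancellations ($\na\cdot u=0$ and $\phi'=\phi\chi$) to recombine the transport and $\pa_t c$ contributions. Note only that your computation (like the paper's own intermediate step \eqref{CK30L30-march30}) produces the coefficient $\frac{4(p-1)}{p}$ on the truncated gradient term rather than the $\frac{2(p-1)}{p}$ stated in \eqref{truncated}; this is a discrepancy internal to the paper and harmless since the identity is only used as a lower bound on the dissipation.
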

\begin{proof}
We note first that
\begin{align*}
\ddt \frac{n}{\phi} &= \frac{n_t \phi -n\phi' c_t}{\phi^2} = \frac
1\phi (\Del n - \na\cdot(n\chi\na c) - u\cdot \na n) - \frac {n\phi'
c_t}{\phi^2}.
\end{align*}
Due to $\na \cdot u = 0$ and $\phi' = \phi \chi$, we also observe
that
\[
\Del n - \na\cdot(n\chi\na c) - u\cdot \na n =  \na \cdot\left( \phi
\na \left (\frac{n}{\phi} \right) -  \frac{n}{\phi} \phi u \right)
\]
and therefore, it follows that
\begin{equation}\label{CK10L10-march30}
\ddt \frac{n}{\phi}  = \frac{1}{\phi} \bkt{\na \cdot \left ( \phi
\na \left(\frac{n}{\phi} \right)- \frac{n}{\phi} \phi u \right)
 - n\chi c_t }.
\end{equation}
Testing $p(\frac{n}{\phi})^{p-1}\phi$ to \eqref{CK10L10-march30} and
using the integration by parts, we obtain
\[
\ddt \int \psca^p \phi= \int p \psca^{p-1} \bkt{ \na \cdot \left (
\phi \na \left(\frac{n}{\phi} \right)- \frac{n}{\phi} \phi u \right)
 - n\chi c_t} +  \psca^p \phi\chi c_t dx
\]
\begin{equation}\label{CK20L20-march30}
=\int p \psca^{p-1}  \na \cdot \left ( \phi \na \left(\frac{n}{\phi}
\right)\right)dx- \int p \psca^{p-1}  \na \cdot \left
(\frac{n}{\phi} \phi u \right)dx -(p-1)\int\psca^p \phi \chi c_t dx.
\end{equation}
%
We estimate separately each term in \eqref{CK20L20-march30}.
\begin{align}\label{1}
\int p \psca^{p-1}  \na \cdot \left ( \phi \na \left(\frac{n}{\phi}
\right) \right) = -p(p-1)\int \phi \psca^{p-2} \abs{\na \psca}^2 dx,
\end{align}
\begin{align}\label{2}\begin{aligned}
&p\int \psca^{p-1} \na \cdot \left (  \sca \phi u \right) dx = p\int
\psca^{p-1} \psca \na \cdot (\phi u)
+  \psca ^{p-1} \na \psca \cdot \phi u dx \\
&\quad= p\int \psca^p \na \cdot (\phi u) dx + \int \na \psca^p \cdot
\phi u dx = (p-1) \int \psca^p \na \cdot (\phi u) dx,
\end{aligned} \end{align}
\begin{align}\label{3}\begin{aligned}
&(p-1)\int \psca^p \phi \chi c_t dx = -(p-1) \int \psca^p \phi (\chi u\cdot \na c + k n) dx \\
&\qquad = -(p-1)\int \psca^p \phi' u \cdot \na c dx  - (p-1)\int \psca^p \phi\chi k n dx \\
&\qquad =-(p-1)\int \psca^p \na \cdot(u\phi) dx - (p-1)\int \phi^2
\psca ^{p+1} \chi k dx.
\end{aligned}\end{align}
Adding up \eqref{1}-\eqref{3}, we obtain \eqref{auxillary}. By the
Sobolev inequality, for any $p$ with $ \max\{ 1, d/2 -1\} \le p
<\infty$, it follows that
\begin{equation}\label{sobolev}
\frac d {dt} \int_{\R^d} ( \frac n {\phi(c)} )^p \phi(c)\le (p-1)
\left\Vert \na ( \frac n {\phi(c)} )^{\frac{p}{2}} \right\Vert_{L^2
(\R^d)}^2
 \cdot \left[  \tilde C(d) K_1
 \left\Vert \phi^{2/d}(c) ( \frac n {\phi(c)} ) \right\Vert_{L^{d/2}(\R^d)}
- \frac 4 p \right],
\end{equation}
as long as $0 \le c\le \| c_0\|_{L^{\infty}}$ and
$\displaystyle\sup_{0\le c \le \| c_0\|_{L^{\infty}} }
\phi^2(c)\chi(c)k(c): = K_1 <\infty$.
%
For the truncated inequality, we proceed similar computations as in
those of the weighted equality. We obtain
\[
\ddt \int \tsca^p \phi  = \int p \tsca^{p-1} \ddt \psca \phi
+\tsca^p \phi' c_t
\]
\[
= \int p \tsca^{p-1}  \na \cdot \left ( \phi \na
\left(\frac{n}{\phi} \right) - \sca \phi u \right)- p\int
\tsca^{p-1} n\chi  c_t  + \int \tsca^p \phi\chi c_t.
\]
We note that integration by parts yields
\begin{equation}\label{CK30L30-march30}
\int p \tsca^{p-1}   \na \cdot \left ( \phi \na \left(\frac{n}{\phi}
\right) \right) dx=
 -p(p-1)\int \phi \tsca^{p-2} \left|\na \psca\right|^2 dx.
\end{equation}
With the aid of replacement of $\sca$ by $\left(\sca - K \right) +K$
and integration by parts,
%
%
it is direct that
\[
-p \int \tsca^{p-1} \na \cdot \left (\sca \phi u \right)  = -pK \int
\tsca^{p-1} \na \cdot (\phi u)
\]
\begin{equation}\label{CK40L40-march30}
-(p-1)\int \tsca^p \na\cdot (\phi u).
\end{equation}
It is also straightforward that
\[
p\int \tsca^{p-1} n\chi \na \cdot (uc)- \int \tsca^p \phi\chi \na
\cdot(uc) dx
\]
\[
= p\int \tsca^{p-1} \left(\sca -K + K  \right)\phi \chi \na \cdot
(uc)- \int \tsca^p \phi\chi \na \cdot(uc) dx
\]
\begin{equation}\label{CK50L50-march30}
= (p-1) \int \tsca^p \phi \chi \na \cdot (uc) dx  + p K \int
\tsca^{p-1} \phi \chi \na \cdot(uc) dx
\end{equation}
As noticed earlier, due to $\phi \chi \na \cdot (uc)=u\phi' \na c =
\na \cdot ( \phi u)$, \eqref{CK40L40-march30} and
\eqref{CK50L50-march30} are cancelled out each other. We also
observe that
\[
p\int \tsca^{p-1} n^2\chi k=p\int \tsca^{p-1} \left ( \sca -K
+K\right)^2 \chi k\phi^2
\]
\begin{equation}\label{CK60L60-march30}
= p\int \tsca^{p+1} \phi^2 \chi k + 2pK\int \tsca^p \phi^2 \chi k  +
pK^2 \int \tsca^{p-1} \phi^2 \chi k,
\end{equation}
\begin{equation}\label{CK70L70-march30}
-\int \tsca^p \phi \chi k n= -\int \tsca^{p+1} \phi^2 \chi k - K
\int \tsca^p \phi^2 \chi k.
\end{equation}
Summing up \eqref{CK30L30-march30}-\eqref{CK70L70-march30}, we
obtain \eqref{truncated}. This completes the proof.
\end{proof}

With the help of Lemma \ref{weighted-estimate}, the remaining
procedures of the proof of Theorem \ref{Theorem6} are almost
identical with those in \cite{PeVa} and, however, we give the sketch
of the proof for clarity.\\

\begin{pfthm4}
Let $\nu(t)$, $\eta(t)$ be auxiliary functions, which will be
specified later and $\phi(c)$ given in Lemma
\ref{weighted-estimate}. We define a truncated energy $E(\xi)$ by
\[
E(\xi): = \sup_{0\le t\le T} \int_{\R^d} \left( \frac n {\phi(c)} - \xi \eta(t) \right)_+^p
+ 2\frac{p-1}{p} 
 \int_0^T\int  \abs{ \na \left( \frac n {\phi(c)} - \xi \eta(t)
 \right)_+^{p/2}}^2.
\]
Using \eqref{truncated} and following
similar procedures as in the proof of Theorem \ref{Theorem4}, it
follows  that under the condition $\eta(0) >\| n_0\|_{L^{\infty}}$
\begin{align}\label{E(xi)} \begin{aligned}
\phi_{min} E(\xi)
 &
 \le \sup_{0 \le t \le T} \int_{\R^d} \left( \frac n {\phi(c)} - \xi \eta(t) \right)_+^p
 \phi(c)\\
 &\qquad + 2 \frac {p-1} p \int_0^T \int_{\R^d} \phi(c)
 \left| \na \left( \frac n {\phi(c)} - \xi \eta(t) \right)_+^{p/2}
 \right|^2
\\
&\le - \xi \int_0^T \dot \eta (t) \int_{\R^d} \left( \frac n
{\phi(c)} - \xi \eta(t) \right)_+^{p-1} \phi(c)
\\
&\quad + (2p-1)\xi \int_0^T  \eta(t) \int_{\R^d} \phi^2 (c) \chi(c)
\kappa(c)
 \left( \frac n {\phi(c)} - \xi \eta(t) \right)_+^p
\\
& \quad + p\xi^2 \int_0^T  \eta(t)^2 \int_{\R^d} \phi^2 (c) \kappa(c)
 \left( \frac n {\phi(c)} - \xi \eta(t) \right)_+^{p-1}.
\end{aligned} \end{align}
By the sobolev embedding it holds that
\begin{align}\label{qembed}
\left\Vert \left( \frac n {\phi(c)} - \xi \eta(t) \right)_+\right\Vert_{L^q([0,T]\times \bbr^d)}^p
\le C E(\xi), \quad q = p(d+2)/d.
\end{align}
On the other hands, we define the level set energy
\[ U(\xi) : = \int_0^T \int \nu(t)\left ( \frac n {\phi(c)} - \xi \eta(t) \right)_+^p dxdt.\]
Differentiating in $\xi$,
\begin{align*}
 U'(\xi)& = -\int_0^T\int p\nu(t)\eta(t) \left( \frac n {\phi(c)} - \xi \eta(t) \right)_+^{p-1} dxdt.
 \end{align*}
 Interpolating $ p-1 < p <q $, we have
\begin{align*}
 U(\xi)& \le  \left( \int_0^T \int \nu(t)^{\frac{p-1}{p(1-\theta)}}
\left( \frac n {\phi(c)} - \xi \eta(t) \right)_+^{p-1} dxdt \right)^{\al}
 E(\xi)^{\theta},
\end{align*}
for $\theta = \frac{d+2}{d+2p},$ $ \al = \frac{2p}{d+2p}$.
  Let the auxiliary functions $\nu(\xi), \eta(\xi)$ satisfy that
 \[\nu(t)^{\frac{p-1}{p(1-\theta)}} + |\dot{\eta}(t)| \le C_1\nu(t) \eta(t),
 \quad  \eta(c) \le C_2\nu(t).\] Then $E(\xi)$, $U(\xi)$ satisfy the following
 differential inequalities:
 \begin{align}\label{ode}\begin{aligned}
 \begin{cases}
U(\xi) \le C_1^{\al}|U'(\xi)|^{\al}E(\xi)^{\theta}, \\
 E(\xi) \le \xi |U'(\xi)|(C_1+ C_2\xi) + C_2\xi U(\xi),
 \end{cases}
\end{aligned} \end{align}
Firstly we choose $\nu(t) = (1+ t)^{1-\frac{d}{2p}}$ and $\eta(t) =
(1+t)^{-1}$ with $C_1= (1+T)^{1-\frac{d}{2p}}$.
 Working \eqref{ode} with $G(\xi) = U(\xi)^{a}$ for some $0< a<1$,
 we arrive at $G(\xi)$ vanishing for a finite $\xi_1$(See (step 4) for Theorem $4.1$ in \cite{PeVa})
 under the condition $ p> \frac{d+2}{2}$. The same holds for $U(\xi)$ and we obtain the decay
 \[ \|n(t)\|_{L^{\infty}} \le C(T)\nu(t).\]
 Next, we choose $\nu(t)= \eta(t)= (1+t)^{-1}$
 with $C_2 = (1+T)^{1 - \frac{d}{2p}}$ to relax the initial integrability of $n_0$
 to $p > \frac{d(d+2)}{2(d+2)}$. Up to this point, $L^{\infty}$ decay of $n(t)$ depends on $T$ for
 $0<t<T$.
When $ \|n_0\|_{L^{\frac d2}}$ is small enough,
 \eqref{auxillary} gives
\[  \int_{\R^d} \left( \frac n {\phi(c)} \right)^{\frac d2} \phi(c)
  + \int_{\R^d} \phi(c) \left| \na \left( \frac n {\phi(c)} \right)^{d/4} \right|^2
 \le C  \|n_0\|_{L^{\frac d2}},\]
from which we have $\| \frac{n}{\phi(c)}\|_{L^{\frac
{d+2}{2}}_{t,x}([0,1]\times \bbr^d)} < C$,
 and  $\| n(t_0 )\|_{L^{\frac{d+2}{2}}} \le C$ for $ t_0 \le 1$. Now using the result for
$p > \frac{d(d+2)}{2(d+2)}$ and scale invariance of the norm $\|
n_0\|_{L^{\frac d2}}$ and $ \|c_0\|_{L^{\infty}}$ under scaling
\eqref{C10KL-march15}, we conclude $\|n(t)\|_{L^{\infty}} \le
\frac{C}{t}$ for a uniform constant $C$. This completes the proof.
\end{pfthm4}

%

%
%

\section*{Acknowledgements}
M. Chae's work was partially supported by NRF-2011-0028951. K.
Kang's work was partially supported by NRF-2012R1A1A2001373.  J.
Lee's work was partially supported by NRF-2011-0006697 and Chung-Ang
University Research Grants in 2013.

\begin{equation*}
\left.
\begin{array}{cc}
{\mbox{Myeongju Chae}}\qquad&\qquad {\mbox{Kyungkeun Kang}}\\
{\mbox{Department of Applied Mathematics }}\qquad&\qquad
 {\mbox{Department of Mathematics}} \\
{\mbox{Hankyong National University
}}\qquad&\qquad{\mbox{Yonsei University}}\\
{\mbox{Ansung, Republic of Korea}}\qquad&\qquad{\mbox{Seoul, Republic of Korea}}\\
{\mbox{mchae@hknu.ac.kr }}\qquad&\qquad {\mbox{kkang@yonsei.ac.kr }}
\end{array}\right.
\end{equation*}
\begin{equation*}
\left.
\begin{array}{c}
{\mbox{Jihoon Lee}}\\
{\mbox{Department of Mathematics }}\\
{\mbox{Chung-Ang University}}\\
{\mbox{Seoul, Republic of Korea}}\\
{\mbox{jhleepde@cau.ac.kr }}
\end{array}\right.
\end{equation*}

\end{document}